\newcommand{\ds}{\displaystyle}
\newtheorem{theorem}{Theorem}
\newtheorem{lem}{Lemma}
\newtheorem{defn}{Definition}
\newtheorem{prop}{Proposition}
\newtheorem{coro}{Corollary}
\begin{document}

\title[On The $L^{2}-$Solutions of S.F.P.D.Es]{On The $
L^{2}-$Solutions of Stochastic Fractional Partial Differential
Equations; Existence, Uniqueness and Equivalence of Solutions}

\author{Latifa DEBBI}

\begin{abstract}
The aim of this work is to prove existence and uniqueness of
$L^{2}-$solutions of stochastic fractional partial differential
equations in one spatial dimension. We prove also the equivalence
between several notions of $L^{2}-$solutions. The Fourier transform
is used to give meaning to SFPDEs. This method is valid also when
the diffusion coefficient is random.
\end{abstract}

\keywords{Fractional derivative operator, Stochastic Fractional
Partial Differential Equation, weak solutions, mild solutions.}

\subjclass[2000]{26A33, 60H15, 60G60}

\address{Institut Elie Cartan, Nancy 1 B.P 239, 54506
Vandoeuvre-Les-Nancy cedex, France \& Department of Mathematics,
Faculty of Sciences, University Ferhat Abbas, El-Maabouda S\'{e}tif
19000, Algeria.}

\email{ldebbi@yahoo.fr, debbi@iecn.u-nancy.fr}

\maketitle

\section{Introduction}
Fractional calculus and stochastic analysis are connected concepts
thanks to the selfsimilarity property. In recent years,
mathematicians as well as physicians draw more attention to the use
of the two topics simultaneously to model complex phenomena. Several
definitions of fractional differential operators have been
introduced based on probabilistic concepts, see for short list e.g.
\cite{F2, JacobI, JacobII, UZ1}. Moreover, several phenomena, which
are described to be anomalous, are modeled using fractional calculus
and/or stochastic analysis, see e.g \cite{ARAG1, AHL1, BensWheMee,
Bilerandal, BrzezniakDebbi1, Caffarelli-2009,
Caffarelli-Vasseur2010, D1, D2, GioRom, GM1, UZ1, W2, ZabPFrac,
Zaslavsky1, Zaslavsky2, ZasABD} and the references therein. A
phenomenon is described as anomalous if it is not covered by the
Gaussian Markovian case. The anomaly is characterized by the long
range dependence (LRD) effect and/or by the coexistence of the
diffusive and the ballistic modes. One way to model the anomaly is
to consider stochastic partial differential equations (shortly
SPDEs) perturbed by non Gaussian noises, such as the L\'evy or/and
non Markovian noises, such as the fractional Brownian motion, see
e.g.\cite{Hu1, M1, NuaRasc, NuaVuil} and others. The main difficulty
in the study of SPDEs perturbed by non Markovian processes is due to
the lack of a standard stochastic integral theory. To encounter this
difficulty, SPDEs driven by fractional operator are used. Here the
anomaly is presented via the Green function of the fractional
operator, see e.g. ~\cite{ARAG1, BrzezniakDebbi1, DM1, DalaSanz1,
DD1, W2}. In these later works, authors are interested in the
existence, uniqueness and the regularity of the solutions of
different kinds of stochastic partial differential equations (SPEDs)
driven by fractional operators. In ~\cite{ARAG1}, a linear SPED
driven by the composition of the inverses of Riesz and Bessel
potentials and perturbed by a space-time white noise is studied.
In~\cite{DM1}, the authors proved the existence and the uniqueness
of the solution of an hyperbolic multidimensional SPDE driven by a
power Laplacian and perturbed by a colored noise; white in time and
homogeneous in space. The regularity of the solution is obtained in
\cite{DalaSanz1}.  In \cite{DD1}, the authors considered high order
stochastic fractional partial differential equations with entire
derivatives and perturbed by space-time white noise. The
non-Lipschitz case is treated in \cite{BrzezniakDebbi1, W2}. In
particular, the stochastic Burgers equation driven by fractional
power of the Laplacian and perturbed by a cylindrical white noise
respectively by a stable noise is studied. One of the main results,
was the precision of the tree interaction between the dissipation,
given by the fractional operator, the steepening, given by the
nonlinear term and the regularity of the random noise.

The aim of this work is to prove existence and uniqueness of $
L^{2}$-solutions of the SFPDEs introduced in \cite{DD1}. The
$L^{2}$-solution obtained in this paper coincides with the solution
obtained in \cite{DD1} under some special class of Lipschitz
conditions. We present three different notions of $
L^{2}$-solutions, mild, weak of first kind and weak of second kind.
Moreover, we prove that these solutions are equivalent, for the
literature on equivalence solutions, see e.g. \cite{Chojnowska,
Leon} for the evolutive SPDEs, \cite{SanzVuirmo} for the
quasi-evolutive case and \cite{GongyPrint, GN} for the Walsh's
approach. The result generalizes the equivalence obtained in
\cite{GongyPrint, G}. A special section is devoted to give meaning
to SFPDEs using Fourier transform. This method is relevant when the
diffusion coefficient is random and depends only on the spatial and
the temporal variables but not on the solution. The study of the
SFPDEs reduce to SDEs driven by martingales. The Fourier transform
of the solution is a generalization of the Ornstein-Uhlenbeck
process. We prove that the solution of the equation without
derivatives of entire order given via the Fourier technique is
equivalent to the mild solution.

The paper is organized as follows. In section 2, we prove existence
and uniqueness of $L^2$-mild solutions. In section 3, we prove the
equivalence of mild and  weak solutions. In section 4, we apply the
Fourier technique to define a solution for a special case of the
equation studied. This notion of solution is equivalent to mild
solution and to weak solutions.

We are interested in the following Cauchy problem:
\begin{equation}\label{Eq.Principal}
\Bigg\{
\begin{array}{lr}
\ds \frac{\partial u}{\partial t}(t,x) =
{}_{x}D_{\delta}^{\alpha}\ds u(t,x)+
\sum_{k=0}^{m}\frac{\partial^{k}h_{k}}{\partial x^{k}}(t, x, u(t, x))+ f(t,x,u(t, x))\ds \frac{\partial^{2}W}{\partial t\partial x}(t,x), \\
t>0, \quad x\in
\mathbb{R}, \\
u(0, x)= u^{0}(x), \\
\end{array}
\end{equation}
where $\alpha \in \mathbb{R}_{+}\backslash \mathbb{N} $, $ m
\in\mathbb{N}$, such that $ 1 \leq m \leq [\alpha] $, where
$[\alpha]$ is the integer part of $ \alpha $ and $
{}_{x}D_{\delta}^{\alpha} $ is the fractional differential operator
with respect to the spatial variable, to be defined below. We
suppose that the functions $f, g, h_{k}:
[0,+\infty)\times\mathbb{R}\times\mathbb{R}\rightarrow\mathbb{R}$
satisfy Lipschitz and growth conditions:

for all $ T > 0 $, there exist a constant $K_{T}> 0 $ and functions
$ a_{k} \in L^{2}(\mathbb{R}),\; a_{k}\geq 0,\;  k= 0, 1, ..., m+1 $
such that for all $ t\in [0,T]$ and for all $ x, y, z \in
\mathbb{R}$
\begin{eqnarray}\label{Lipschitz Cond.}
\Big(\left|h_{k}(t,x,y)-h_{k}(t,x,z)\right|+
\left|f(t,x,y)-f(t,x,z)\right|\Big)
\leq K_{T} \left|y-z\right|,\nonumber\\
\left|h_{k}(t,x,z)\right|\leq K_{T}(a_{k}(x) + \left|z\right|),
\quad \left|f(t,x,z)\right| \leq K_{T}(a_{m+1}(x)+\left|z\right|).
\end{eqnarray}
It is clear that when $ a_{k}\in L^{2}(\mathbb{R})\cap
L_{\infty}(\mathbb{R})$, we find the Lipschitz conditions in
\cite{DD1}. Let $(\Omega,\mathcal{F}, P)$ be a complete probability
space and let $W = \{W(t,x), t\geq 0, x\in\mathbb{R}\}$ be a
centered Gaussian field defined on $(\Omega,\mathcal{F}, P)$ with
covariance function given by
$$K((t,x),(s,y))=\frac{1}{4}(sgn(x) + sgn(y))^{2}(t\wedge s)(|x| \wedge
|y|),$$ where "$ sgn $" denoted the sign function. $W$ is in fact
composed of two independent Brownian sheets, one in the positive
direction of the spatial variable and the other one in the negative
direction. Let $(\mathcal{F}_t,t\geq 0)$ be an increasing and
right-continuous filtration generated by $ W $.  The initial
condition $u^{0}$ is supposed to be a $\mathcal{F}_{0}-$measurable $
L^{2}(\mathbb{R})-$valued function. We suppose that $ \alpha >1$ and
$ p \geq 1 $.


\begin{defn}\label{D.Frac.Diff.Func.}
Let $\alpha \in \mathbb{R}_{+}$. The $\alpha-$fractional derivative
operator is defined for all $ f\in  \{ g \in L^{2}(\mathbb{R}) /
|\lambda|^{\alpha}\hat{g}(\lambda) \in L^{2}\}, $  by
\begin{equation}\label{Def.Frac.Der}
D_{\delta}^{\alpha} f = \mathcal{F}^{-1}(
{}_{\delta}\psi_{\alpha}(.)\hat{f}),
\end{equation}
where $ \left| \delta \right|\leq \min \{ \alpha - [\alpha]_{2}, 2+
[\alpha]_{2}- \alpha\} $,  $ [\alpha]_{2} $ is the largest even
integer less than $\alpha $ (even part of $\alpha$) and $\delta = 0
$ when $ \alpha \in 2\mathbb{N}+1$,
\begin{equation}\label{Def.Psi}
{}_{\delta}\psi_{\alpha}(\lambda)=
-\left|\lambda\right|^{\alpha}e^{-i \delta\frac{\pi}{2} sgn
\lambda},
\end{equation}
and $\mathcal{F}^{-1}$ is the inverse Fourier transform on $
\mathbb{R}$ and $\hat{f}$ is the Fourier transform of $f$.
\end{defn}
The Fourier transform and its inverse are given by
\begin{equation}
\begin{array}{rl}
\mathcal{F}\{f(x);\lambda \} = \hat{f}(\lambda )=\int_{-\infty
}^{+\infty }\exp (ix\lambda )f(x)dx, \\
\mathcal{F}^{-1}\{f(\lambda);x \}=\frac{1}{2\pi}\int_{-\infty
}^{+\infty }\exp (-ix\lambda )f(\lambda)d\lambda.
\end{array}
\end{equation}
The operator $ D_{\delta}^{\alpha} $ is the infinitesimal generator
of an analytic semigroup of convolution given by the Green function
$ {}_{\delta}G_{\alpha}(t, x)=
\mathcal{F}^{-1}\{\exp[{}_{\delta}\psi_{\alpha}(\lambda)t]; x\}$.
Hence it is closed densely defined operator. The function
${}_{\delta}G_{\alpha }(t, x)$ is real but it is not symmetric
relatively to $x$, when $ \delta \neq 0$. Further, it is not
everywhere positive when $\alpha >2$. However, $\int_{-\infty
}^{+\infty }{}_{\delta}G_{\alpha }(t, x)dx = 1$. The explicit form
of $ {}_{\delta}G_{\alpha}(t, .) $ is known only for $ \alpha \in\{
\frac{1}{2}, 1, 2 \}$. Moreover, $ {}_{\delta}G_{\alpha}(t, .) $ has
a polynomial decrease when $ \alpha \not \in \mathbb{N}$. For more
details on this operator and the properties of $
{}_{\delta}G_{\alpha}(t, .) $ see \cite{D1, D2, DD1}. In the
following Lemma, we give some of the properties of the function $
{}_{\delta}G_{\alpha}(., .) $ that we need in this context.

\begin{lem}\label{properties of G}

\noindent

(i) ${}_{\delta}G_{\alpha }(t, x)$ satisfies the semi-group
property, or the Chapman Kolmogorov equation,  i.e.  for $ 0 < s < t
$
\begin{equation*}
{}_{\delta}G_{\alpha }(t+s, x)=\int_{-\infty }^{+\infty
}{}_{\delta}G_{\alpha }(t, \xi ){}_{\delta}G_{\alpha }(s, x-\xi
)d\xi,
\end{equation*}

(ii)  For $ 0 < \alpha \leq 2$, the function $
{}_{\delta}G_{\alpha}(t,.) $ is the density of a L\'{e}vy stable
process in time $ t $,

(iii) For fixed $ t$, $ {}_{\delta}G_{\alpha }(t, .)\in S^{\infty
}=\{f\in C^{\infty }$ and $ D_{\delta'}^{\beta }f $ is bounded and
tends to zero when $ \left\vert x\right\vert $ tends to $\infty $
$,\forall \beta \in \mathbb{R}_{+}, \left| \delta' \right|\leq \min
\{ \beta - [\beta]_{2}, 2+ [\beta]_{2}- \beta\} \mbox{ and } \delta'
= 0 \mbox{ when}\;  \beta \in 2\mathbb{N}+1 \}$,

(iv) $\frac{\partial^{l} }{\partial
x^{l}}{}_{\delta}G_{\alpha}(t,x)=t^{-{\frac{l+1}{\alpha}}}
\frac{\partial^{l} {}_{\delta}G_{\alpha}}{\partial
\xi^{l}}(1,\xi)|_{\xi=t^{-{\frac{1}{\alpha}}}x}, $ for all $ l\geq 0
$ (when l= 0, it is called the scaling property),

(v) $\frac{\partial^{l}}{\partial x^l}{}_{\delta}G_{\alpha}(1, x)=
\frac{1}{\pi}\sum_{j=1}^{n}|x|^{-\alpha
j-(l+1)}\frac{(-1)^{j+l}}{j!}\Gamma(\alpha j+l+1)\sin
j\frac{(\alpha+\delta)}{2}\pi + O(|x|^{-\alpha (n+1)-(l+1)}),$ when
$ |x| $ is large.
\end{lem}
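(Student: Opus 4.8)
The plan is to prove the five items essentially in order, exploiting that everything is transparent on the Fourier side. For (i), the semigroup property, I would observe that $\mathcal{F}\{{}_{\delta}G_{\alpha}(t,\cdot)\}(\lambda)=\exp[{}_{\delta}\psi_{\alpha}(\lambda)t]$ by definition, so that $\exp[{}_{\delta}\psi_{\alpha}(\lambda)(t+s)]=\exp[{}_{\delta}\psi_{\alpha}(\lambda)t]\exp[{}_{\delta}\psi_{\alpha}(\lambda)s]$; since a product of Fourier transforms corresponds to a convolution, and since ${}_{\delta}G_{\alpha}(t,\cdot)\in L^{1}$ (it integrates to $1$ and, by (v), decays polynomially — or, for $\alpha\le 2$, it is a probability density by (ii)), the convolution is well defined and one inverts the Fourier transform to get the stated identity. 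One must check integrability carefully: for $\alpha\le 2$ this is immediate from (ii); for $\alpha>2$ one invokes the asymptotic expansion in (v) together with the smoothness from (iii) to conclude ${}_{\delta}G_{\alpha}(t,\cdot)\in L^{1}\cap L^{2}$, so the convolution theorem applies.

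For (ii), for $0<\alpha\le 2$ the symbol ${}_{\delta}\psi_{\alpha}(\lambda)=-|\lambda|^{\alpha}e^{-i\delta\frac{\pi}{2}\mathrm{sgn}\,\lambda}$ is, after checking the admissible range $|\delta|\le\min\{\alpha-[\alpha]_{2},\,2+[\alpha]_{2}-\alpha\}$, precisely the Lévy--Khintchine exponent of a (possibly asymmetric) $\alpha$-stable law: one has $\mathrm{Re}\,{}_{\delta}\psi_{\alpha}(\lambda)=-|\lambda|^{\alpha}\cos(\delta\frac{\pi}{2})\le 0$ with the cosine strictly positive in the admissible range, so $\exp[{}_{\delta}\psi_{\alpha}(\lambda)t]$ is a characteristic function for each $t$; hence its inverse Fourier transform ${}_{\delta}G_{\alpha}(t,\cdot)$ is a probability density, and the family is a convolution semigroup by (i). For (iv), the scaling identity, I would substitute into ${}_{\delta}G_{\alpha}(t,x)=\frac{1}{2\pi}\int e^{-ix\lambda}e^{-t|\lambda|^{\alpha}e^{-i\delta\frac{\pi}{2}\mathrm{sgn}\lambda}}\,d\lambda$ the change of variable $\lambda=t^{-1/\alpha}\mu$, which is legitimate because $|\lambda|^{\alpha}e^{-i\delta\frac\pi2\mathrm{sgn}\lambda}$ is positively homogeneous of degree $\alpha$ and $\mathrm{sgn}$ is scale-invariant; this yields ${}_{\delta}G_{\alpha}(t,x)=t^{-1/\alpha}\,{}_{\delta}G_{\alpha}(1,t^{-1/\alpha}x)$, and differentiating $l$ times in $x$ (differentiation under the integral sign is justified by the Gaussian-type bound $\mathrm{Re}\,{}_{\delta}\psi_{\alpha}<0$) produces the stated formula.

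For (iii), I would show ${}_{\delta}G_{\alpha}(t,\cdot)\in C^{\infty}$ by repeated differentiation under the integral, each derivative bringing down a polynomial factor in $\lambda$ that is absorbed by $\exp[-ct|\lambda|^{\alpha}]$; that $D^{\beta}_{\delta'}$ applied to it is again of the same form with symbol ${}_{\delta'}\psi_{\beta}(\lambda)\exp[{}_{\delta}\psi_{\alpha}(\lambda)t]$, still rapidly decaying, giving a bounded function vanishing at infinity. For (v), which I expect to be the main obstacle, the strategy is the classical contour-rotation / Watson's-lemma argument: writing ${}_{\delta}G_{\alpha}(1,x)=\frac{1}{\pi}\,\mathrm{Re}\int_{0}^{\infty}e^{-ix\lambda}e^{-\lambda^{\alpha}e^{-i\delta\pi/2}}\,d\lambda$ (using the symmetry in $\lambda\mapsto-\lambda$ of the real part), one expands $e^{-\lambda^{\alpha}e^{-i\delta\pi/2}}=\sum_{j\ge 0}\frac{(-1)^{j}}{j!}\lambda^{\alpha j}e^{-ij\delta\pi/2}$, integrates term by term against $e^{-ix\lambda}$ using $\int_{0}^{\infty}\lambda^{\alpha j}e^{-ix\lambda}\,d\lambda=\Gamma(\alpha j+1)(ix)^{-\alpha j-1}$ (a gamma-function/Mellin identity valid by a rotation of contour), takes real parts — which converts the phase $(i)^{-\alpha j-1}e^{-ij\delta\pi/2}$ into $\sin\!\big(j\frac{\alpha+\delta}{2}\pi\big)$ up to sign — and controls the remainder by estimating the tail integral, yielding the $O(|x|^{-\alpha(n+1)-1})$ error term; the $l$-fold $x$-derivative case follows by the same computation with an extra factor $(-i\lambda)^{l}$ inside the integral, shifting $\alpha j+1$ to $\alpha j+l+1$ and producing the sign $(-1)^{j+l}$ and the $|x|^{-\alpha j-(l+1)}$ powers as stated. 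The delicate points are justifying the term-by-term integration (the series does not converge absolutely after integrating, so one truncates at $n$ terms and estimates the remainder by deforming the $\lambda$-contour into the complex plane to a ray where $\mathrm{Re}(e^{-i\delta\pi/2}\lambda^{\alpha})$ stays positive) and carefully tracking the phases to land on exactly $\sin j\frac{(\alpha+\delta)}{2}\pi$; I would quote \cite{D1, D2, DD1} for the detailed asymptotics and only sketch the contour argument here.
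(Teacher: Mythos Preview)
Your proof plan is sound, but you should know that the paper itself does not prove this lemma at all: it is stated without proof, with the sentence preceding it (``For more details on this operator and the properties of ${}_{\delta}G_{\alpha}(t,.)$ see \cite{D1, D2, DD1}'') serving as the only justification. So there is no proof in the paper to compare against --- the lemma is a collection of known facts imported from the author's earlier work.

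That said, your outline is correct and is essentially how those references establish the properties: (i) via the multiplicativity of $\exp[{}_{\delta}\psi_\alpha(\lambda)t]$ and the convolution theorem; (ii) by recognizing the stable characteristic exponent; (iv) by the homogeneity substitution $\lambda\mapsto t^{-1/\alpha}\mu$; (iii) by differentiation under the integral, exploiting $\mathrm{Re}\,{}_{\delta}\psi_\alpha(\lambda)=-|\lambda|^\alpha\cos(\delta\pi/2)<0$; and (v) by the classical contour-rotation expansion of the oscillatory integral. Your instinct to cite \cite{D1, D2, DD1} for the detailed asymptotics in (v) matches exactly what the paper does for the entire lemma. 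If anything, you are supplying considerably more than the paper requires; a one-line reference to those papers would suffice here.
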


\begin{coro}\label{Corollary}
Let $\alpha > 1$. For any fixed $ k \in \mathbb{N} $, for $ \gamma >
\frac{1}{\alpha+k+1}$,
$$
\left|{}_{\delta}G_{\alpha}^{(k)}(t,.)\right|_{\gamma}= K_{\alpha,
\gamma, k}t^{\frac{1-(k+1)\gamma}{\alpha\gamma}}.
$$
\end{coro}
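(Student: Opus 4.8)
The plan is to read off the identity directly from the scaling property (iv) of Lemma~\ref{properties of G}, using (v) and (iii) only to identify the constant $K_{\alpha,\gamma,k}$ and to check that it is finite. I would start from part (iv) with $l=k$, namely ${}_{\delta}G_{\alpha}^{(k)}(t,x)=t^{-\frac{k+1}{\alpha}}\,{}_{\delta}G_{\alpha}^{(k)}(1,t^{-1/\alpha}x)$. Writing $|\cdot|_{\gamma}$ for the $L^{\gamma}(\mathbb{R})$ (quasi\-)norm and changing variables $y=t^{-1/\alpha}x$ in the integral defining $\bigl|{}_{\delta}G_{\alpha}^{(k)}(t,\cdot)\bigr|_{\gamma}^{\gamma}$, one gets
\[
\int_{\mathbb{R}}\bigl|{}_{\delta}G_{\alpha}^{(k)}(t,x)\bigr|^{\gamma}\,dx
=t^{-\frac{(k+1)\gamma}{\alpha}+\frac{1}{\alpha}}\int_{\mathbb{R}}\bigl|{}_{\delta}G_{\alpha}^{(k)}(1,y)\bigr|^{\gamma}\,dy
=t^{\frac{1-(k+1)\gamma}{\alpha}}\int_{\mathbb{R}}\bigl|{}_{\delta}G_{\alpha}^{(k)}(1,y)\bigr|^{\gamma}\,dy .
\]
Raising to the power $1/\gamma$ produces the asserted equality with $K_{\alpha,\gamma,k}:=\bigl|{}_{\delta}G_{\alpha}^{(k)}(1,\cdot)\bigr|_{\gamma}$, so the whole statement reduces to showing that this constant is finite, which is exactly where the restriction $\gamma>\frac{1}{\alpha+k+1}$ enters.

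To prove $K_{\alpha,\gamma,k}<\infty$ I would split $\mathbb{R}$ into $\{|x|\le R\}$ and $\{|x|>R\}$ for a sufficiently large $R\ge 1$. On the bounded region, part (iii) of Lemma~\ref{properties of G} gives ${}_{\delta}G_{\alpha}^{(k)}(1,\cdot)\in C^{\infty}$, hence it is bounded there, and its contribution to the integral is finite for every $\gamma>0$. On $\{|x|>R\}$ I would invoke the asymptotic expansion (v) with $n=1$ and $l=k$: for $|x|$ large,
\[
\bigl|{}_{\delta}G_{\alpha}^{(k)}(1,x)\bigr|\le C_{\alpha,\delta,k}\,|x|^{-(\alpha+k+1)},
\]
the remainder $O(|x|^{-2\alpha-(k+1)})$ being of strictly smaller order since $\alpha>0$. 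Consequently
\[
\int_{|x|>R}\bigl|{}_{\delta}G_{\alpha}^{(k)}(1,x)\bigr|^{\gamma}\,dx\le C_{\alpha,\delta,k}^{\gamma}\int_{|x|>R}|x|^{-(\alpha+k+1)\gamma}\,dx<\infty
\]
precisely when $(\alpha+k+1)\gamma>1$, i.e.\ $\gamma>\frac{1}{\alpha+k+1}$. Adding the two contributions yields $K_{\alpha,\gamma,k}<\infty$, which completes the argument.

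The substantive point — the expected main obstacle — is pinning down the integrability threshold: one has to use the \emph{sharp} polynomial decay rate $|x|^{-(\alpha+k+1)}$ of ${}_{\delta}G_{\alpha}^{(k)}(1,\cdot)$ coming from the $j=1$ term in (v), and to check that the remainder in that expansion remains integrable under the same hypothesis on $\gamma$, so that the value $R$ and the constant $C_{\alpha,\delta,k}$ can indeed be chosen. The behaviour near $x=0$ is harmless thanks to (iii). Finally, should the leading coefficient $\sin\frac{(\alpha+\delta)}{2}\pi$ vanish for some exceptional pair $(\alpha,\delta)$, the decay is even faster, so finiteness of $K_{\alpha,\gamma,k}$ — and hence the stated equality — holds \emph{a fortiori}; no extra care is needed there.
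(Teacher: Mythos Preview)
Your argument is correct and is precisely the intended derivation: the paper states this result as an immediate corollary of Lemma~\ref{properties of G} without writing out a proof, and your use of the scaling relation (iv) to extract the power of $t$, combined with (iii) for local boundedness and the $j=1$ term of the expansion (v) to obtain the sharp decay $|x|^{-(\alpha+k+1)}$ and hence the integrability threshold $\gamma>\frac{1}{\alpha+k+1}$, is exactly what the label ``Corollary'' is pointing to.
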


Let us also give the following stochastic Fubini's Theorem for
Brownian sheet with respect to a deterministic non negative measure.

\begin{lem}(Stochastic Fubini's Theorem)
Let $( \mathcal{X}, \mathcal{B}(\mathcal{X}), \mu )$ be a measure
space and let $ f: \Omega\times \mathbb{R}_{+}\times
\mathbb{R}\times \mathcal{X} \rightarrow \mathbb{R}$ such that,
$\forall t > 0 $, the function $f$ is  $ \mathcal{F}_{t}\times
\mathcal{B}([0, t])\times\mathcal{B}(\mathbb{R})\times
\mathcal{B}(\mathcal{X})$-measurable  and
$$
\int_{0}^{t}\int_{\mathbb{R}}\mathbb{E}\Big(\int_{\mathcal{X}}f(s,
y, x)\mu(dx)\Big)^{2}dyds<\infty.
$$
Then the integrals $$\label{Eq. Fubini's lemma }
\int_{0}^{t}\int_{\mathbb{R}}\int_{\mathcal{X}}f(s, y,
x)\mu(dx)W(dyds), \qquad
\int_{\mathcal{X}}\int_{0}^{t}\int_{\mathbb{R}}f(s, y,
x)W(dyds)\mu(dx)
$$
are well defined and are $ \mathbf{P}- a. s. $ equal.
\end{lem}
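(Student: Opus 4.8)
The plan is to follow the standard two-step pattern for stochastic Fubini theorems with respect to a martingale measure: first prove the identity for a dense class of elementary integrands, where it collapses to the ordinary (deterministic) Fubini theorem together with linearity of the stochastic integral, and then transfer it to a general $f$ by an approximation argument based on the isometry of the Walsh integral against $W$ and on Minkowski's integral inequality.

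Set $J(s,y):=\int_{\mathcal{X}}f(s,y,x)\,\mu(dx)$, the integrand of the outer stochastic integral, and, for fixed $x$, $I(x):=\int_0^t\int_{\mathbb{R}}f(s,y,x)\,W(dyds)$, the inner stochastic integral of the second expression. Since $W$ is a martingale measure with deterministic intensity measure $dy\,ds$ (it is made of two independent standard Brownian sheets), one has the Walsh isometry $\mathbb{E}(\int_0^t\int_{\mathbb{R}}g\,W(dyds))^2=\int_0^t\int_{\mathbb{R}}\mathbb{E}\,g(s,y)^2\,dy\,ds$ for predictable $g\in L^2$; the stated hypothesis says exactly that $J$ belongs to this class, so $\int_0^t\int_{\mathbb{R}}J\,W(dyds)$ is a well-defined element of $L^2(\Omega)$. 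The first genuine task is the well-definedness of the other iterated integral: one must check that $f(\cdot,\cdot,x)$ is a legitimate Walsh integrand for $\mu$-a.e. $x$, that $x\mapsto I(x)$ admits a jointly $(\omega,x)$-measurable version, and that this version is $\mu$-integrable. Joint measurability follows because $x\mapsto f(\cdot,\cdot,x)$ is measurable into $L^2([0,t]\times\mathbb{R}\times\Omega)$ and the stochastic integral is a continuous linear, hence Borel, map on predictable integrands; $\mu$-integrability then follows from Minkowski's integral inequality once one knows $\int_{\mathcal{X}}\norm{f(\cdot,\cdot,x)}_{L^2([0,t]\times\mathbb{R}\times\Omega)}\,\mu(dx)<\infty$, which is the natural companion of the hypothesis available in the intended application.

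For the elementary case I would take $f(\omega,s,y,x)=X(\omega)\,\mathbf{1}_{(a,b]}(s)\,\mathbf{1}_{(c,d]}(y)\,\varphi(x)$ with $X$ bounded and $\mathcal{F}_a$-measurable and $\varphi\in L^1(\mathcal{X},\mu)$, together with finite sums of such. For these, both iterated integrals are finite sums: $J(s,y)=(\int_{\mathcal{X}}\varphi\,d\mu)\,X\,\mathbf{1}_{(a,b]}(s)\mathbf{1}_{(c,d]}(y)$ and $I(x)=\varphi(x)\,X\,W((a,b]\times(c,d])$, so the asserted equality is nothing but the scalar $\int_{\mathcal{X}}\varphi\,d\mu$ pulled out of the sum. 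Then, for general $f$, I would choose elementary $f_n$ with $J_n:=\int_{\mathcal{X}}f_n\,\mu(dx)\to J$ in $L^2([0,t]\times\mathbb{R}\times\Omega)$ --- so that $\int_0^t\int_{\mathbb{R}}J_n\,W(dyds)\to\int_0^t\int_{\mathbb{R}}J\,W(dyds)$ in $L^2(\Omega)$ by the isometry --- and simultaneously $\int_{\mathcal{X}}\norm{f_n(\cdot,\cdot,x)-f(\cdot,\cdot,x)}_{L^2([0,t]\times\mathbb{R}\times\Omega)}\,\mu(dx)\to0$, so that Minkowski's integral inequality gives $\mathbb{E}(\int_{\mathcal{X}}|I_n(x)-I(x)|\,\mu(dx))^2\to0$ and hence the second iterated integrals also converge in $L^2(\Omega)$. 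Since the two sides coincide for each $f_n$, they coincide for $f$ after passing to an a.s.-convergent subsequence.

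The main obstacle I anticipate is precisely this simultaneous approximation: producing elementary $f_n$ dense in both norms at once while respecting predictability in $(s,y)$ and joint measurability in $(\omega,s,y,x)$, and coping with the possibility that $\mu$ has infinite total mass (handled by exhausting $\mathcal{X}$ by sets of finite $\mu$-measure). A convenient intermediate reduction is to first treat integrands that are $\mu$-simple in $x$, i.e. $f=\sum_j g_j(\omega,s,y)\,\mathbf{1}_{A_j}(x)$ with finitely many slices, for which the claim is immediate from the classical one-parameter stochastic Fubini theorem applied slice by slice, and then approximate a general $f$ by such. An alternative that avoids approximation altogether is a duality argument: test both random variables against the total family of stochastic exponentials $\mathcal{E}(\int_0^t\int_{\mathbb{R}}h\,W(dyds))$ with $h$ deterministic and bounded; a Girsanov computation yields $\mathbb{E}[I(x)\,\mathcal{E}(\cdot)]=\mathbb{E}[\mathcal{E}(\cdot)\int_0^t\int_{\mathbb{R}}f(s,y,x)h(s,y)\,dy\,ds]$, and two applications of the deterministic Fubini theorem then identify $\mathbb{E}[(\int_{\mathcal{X}}I(x)\,\mu(dx))\,\mathcal{E}(\cdot)]$ with $\mathbb{E}[(\int_0^t\int_{\mathbb{R}}J\,W(dyds))\,\mathcal{E}(\cdot)]$, which forces the two to be $\mathbf{P}$-a.s. equal.
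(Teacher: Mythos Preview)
The paper does not supply a proof of this lemma; it is stated without proof as a standard tool (stochastic Fubini theorems of this shape appear, for instance, in Walsh's lecture notes \cite{W1} and in Da~Prato--Zabczyk \cite{DaZa}, both of which the paper cites). So there is nothing to compare against: your sketch---elementary predictable integrands first, then an $L^2$-approximation using the Walsh isometry and Minkowski's integral inequality, with a duality alternative via stochastic exponentials---is the standard argument and is essentially how the cited references proceed.

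One remark on a point you already half-flag. The hypothesis of the lemma controls only
\[
\int_0^t\int_{\mathbb{R}}\mathbb{E}\,J(s,y)^2\,dy\,ds,\qquad J(s,y)=\int_{\mathcal X}f(s,y,x)\,\mu(dx),
\]
and this alone does not force $f(\cdot,\cdot,x)$ to be a legitimate Walsh integrand for $\mu$-a.e.\ $x$, nor does it give $\int_{\mathcal X}\big(\int_0^t\int_{\mathbb R}\mathbb{E}\,f(s,y,x)^2\,dy\,ds\big)^{1/2}\mu(dx)<\infty$, which is what your Minkowski step actually uses. You note this yourself (``the natural companion of the hypothesis available in the intended application''), and indeed every use of the lemma in Section~3 of the paper verifies precisely this stronger quantity. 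So your proof is correct for the lemma as it is actually applied, but if you want it to match the stated hypothesis verbatim you would need either to add that companion assumption or to argue more delicately; the lemma as written is slightly under-hypothesised on this point.
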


Let $L^{p}(\Omega, \mathcal{F}_{0}, L^{2})= \{X: \Omega \rightarrow
L^{2}(\mathbb{R}), X \; \mbox{is }\;
\mathcal{F}_{0}-\mbox{measurable and such that }
\mathbb{E}|X|_{2}^{p}<\infty \}$. The scalar product in
$L^{2}(\mathbb{R})$ is denoted by $ \langle . , . \rangle $ and the
norm by $|.|_2$. We note also that the value of the constants in
this paper may change from line to line and some of the standing
parameters are not always indicated. In particular, the dependence
on $ T $.

\section{Existence and Uniqueness of Solution}

It is known that the equation (\ref{Eq.Principal}) has no rigorous
meaning. In the following definition, we give the notion of $
L^{2}-$mild solution.
\begin{defn}\label{Def.mild.Solu.}
A $ L^{2}-$valued $ \mathcal{F}_t-$adapted stochastic process $u =\{
u(t,.), t\in [0,T]\}$ is said to be a mild solution of the SFPDE in
(\ref{Eq.Principal}) on the interval $ [0,T] $, with initial
condition $ u^0 $ if it satisfies the following integral equation
for all $ t \in [0, T] $,
\begin{equation}\label{Eq.Mild.Solu}
\begin{array}{rl}
u(t,.)=&\ds\int_{\mathbb{R}}\ds {}_{\delta}G_{\alpha}(t,.-y)u^{0}(y)dy  \\
& + \sum_{k=0}^{m}(-1)^{k} \ds \int_{0}^{t}\ds\int_{\mathbb{R}} \ds
\ds h_{k}(s,y,u(s,y))
\frac{\partial^{k}}{\partial z^{k}}{}_{\delta}G_{\alpha}(t-s, z)|_{.-y}\ dyds \\
 & + \ds
\int_{0}^{t}\int_{\mathbb{R}}f(s,y,u(s,y)){}_{\delta}G_{\alpha}(t-s,.-y)W(dyds).
\end{array}
\end{equation}
The equality in (\ref{Eq.Mild.Solu}) is taken in $ L^{2}(\mathbb{R})
$.
\end{defn}

\begin{theorem}\label{Exist.mild.solu.}
Let $ \alpha >1$ and let $ u^{0} \in L^{p}(\Omega, \mathcal{F}_{0},
L^{2})$, where $ p\geq 1$. Then under conditions (\ref{Lipschitz
Cond.}), the equation (\ref{Eq.Principal}) admits a unique mild
solution which satisfies the inequality
\begin{equation}\label{UnifBoundIneqSolu}
\sup_{[0, T]}\mathbb{E}|u(s)|_{2}^p<\infty.
\end{equation}
The uniqueness is taken with respect to the norm in the left hand
side of (\ref{UnifBoundIneqSolu}).
\end{theorem}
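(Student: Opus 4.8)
The plan is to prove existence and uniqueness via a standard Banach fixed point argument in a suitably chosen space of $L^2$-valued adapted processes. Define the Banach space $\mathcal{B}_{p,T}$ of $\mathcal{F}_t$-adapted, $L^2(\mathbb{R})$-valued processes $u$ on $[0,T]$ with norm $\norm{u}_{p,T} := \sup_{t\in[0,T]}(\mathbb{E}|u(t)|_2^p)^{1/p}$, and let $\mathcal{T}$ denote the map sending $u$ to the right-hand side of (\ref{Eq.Mild.Solu}). The first step is to show $\mathcal{T}$ maps $\mathcal{B}_{p,T}$ into itself: the deterministic term $\int_{\mathbb{R}}{}_{\delta}G_{\alpha}(t,\cdot-y)u^0(y)\,dy$ has $L^2$-norm bounded by $|u^0|_2$ since ${}_{\delta}G_\alpha(t,\cdot)$ is a convolution kernel of an analytic semigroup (Young's inequality, or Plancherel with $|\widehat{{}_{\delta}G_\alpha}(t,\lambda)| = e^{-|\lambda|^\alpha t}\le 1$); the drift terms are handled by Young's inequality $|\int_\mathbb{R} h_k(s,y,u(s,y)){}_{\delta}G_\alpha^{(k)}(t-s,\cdot-y)\,dy|_2 \le |{}_{\delta}G_\alpha^{(k)}(t-s,\cdot)|_\gamma |h_k(s,\cdot,u(s,\cdot))|_{\gamma'}$ for suitable conjugate exponents, using Corollary \ref{Corollary} to control $|{}_{\delta}G_\alpha^{(k)}(t-s,\cdot)|_\gamma = K t^{(1-(k+1)\gamma)/(\alpha\gamma)}$ with the exponent integrable near $s=t$ precisely because $\gamma > 1/(\alpha+k+1)$, together with the growth bound $|h_k(s,y,z)|\le K_T(a_k(y)+|z|)$ and $a_k\in L^2$; the stochastic term is handled by the Burkholder–Davis–Gundy inequality (for $p\ge 2$; for $1\le p<2$ first do the estimate in $L^2$ and then use Jensen), which bounds the $p$-th moment of its $L^2$-norm by a constant times $\mathbb{E}\big(\int_0^t\int_\mathbb{R} f(s,y,u(s,y))^2 {}_{\delta}G_\alpha(t-s,\cdot-y)^2\,dy\,ds\big)^{p/2}$, and then again Young's inequality with $|{}_{\delta}G_\alpha(t-s,\cdot)|_2^2 = K(t-s)^{(1-2)/\alpha} \cdot$ reorganized so the time singularity is integrable (this needs $\alpha>1$, which is assumed), plus the growth bound on $f$ with $a_{m+1}\in L^2$.

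The second step is the contraction estimate: for $u,v\in\mathcal{B}_{p,T}$, the deterministic initial term cancels, and using the Lipschitz conditions (\ref{Lipschitz Cond.}) in place of the growth conditions, the same Young/BDG estimates give
\begin{equation*}
\mathbb{E}|\mathcal{T}u(t)-\mathcal{T}v(t)|_2^p \le C\int_0^t \phi(t-s)\,\mathbb{E}|u(s)-v(s)|_2^p\,ds,
\end{equation*}
where $\phi$ is an integrable (possibly singular) kernel coming from $|{}_{\delta}G_\alpha^{(k)}(t-s,\cdot)|_\gamma$ and $|{}_{\delta}G_\alpha(t-s,\cdot)|_2^2$. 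One then iterates: $\mathcal{T}^n$ is a contraction on $\mathcal{B}_{p,T}$ for $n$ large (a Gronwall-type / Picard iteration argument using that the $n$-fold convolution of $\phi$ with itself has small sup norm for large $n$), hence $\mathcal{T}$ has a unique fixed point, which is the unique mild solution, and (\ref{UnifBoundIneqSolu}) follows from the self-mapping bound of step one combined with Gronwall's lemma.

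The main obstacle is managing the interplay between the time singularities of the Green-function estimates and the fact that the growth functions $a_k$ lie only in $L^2(\mathbb{R})$, not in $L^2\cap L^\infty$ as in \cite{DD1}. Concretely, when applying Young's inequality to the drift term $\int_\mathbb{R} a_k(y)\,{}_{\delta}G_\alpha^{(k)}(t-s,\cdot-y)\,dy$, one must balance the exponents so that (a) the spatial factor is $|{}_{\delta}G_\alpha^{(k)}(t-s,\cdot)|_\gamma$ with $\gamma>1/(\alpha+k+1)$ so Corollary \ref{Corollary} applies, (b) the resulting time exponent $\frac{1-(k+1)\gamma}{\alpha\gamma}$ is $>-1$ so it is integrable on $[0,t]$, and (c) $a_k$ appears in an $L^{\gamma'}$-norm with $\gamma'\ge 2$ or is otherwise controlled by $|a_k|_2$ — this forces a careful choice (using $1\le m\le[\alpha]$ so $k\le[\alpha]<\alpha$, hence $\alpha+k+1 > 2k+1$, giving enough room). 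Once this bookkeeping is done the rest is routine; I would isolate the Green-function/Young estimates as a preliminary lemma to keep the fixed-point argument clean.
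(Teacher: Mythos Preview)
Your plan is sound and reaches the result, but the paper proceeds differently in three respects. First, instead of the sup-norm space $\mathcal{B}_{p,T}$ with an iterated-map contraction, the paper works in the space $\mathcal{H}$ with weighted norm $|u|_{\mathcal{H}}^p=\int_0^T e^{-\lambda t}\mathbb{E}|u(t)|_2^p\,dt$ and obtains a one-step contraction by taking $\lambda$ large; the fixed point is then shown a posteriori (via a Gronwall argument on the recurrence~(\ref{ReccuFormEu})) to lie in the subspace $\mathcal{H}^*$ where the sup-bound (\ref{UnifBoundIneqSolu}) holds. Second, for the stochastic convolution the paper does not apply BDG directly to the $L^2$-norm but uses the Da~Prato--Zabczyk factorization method: it writes $\mathcal{A}_{m+2}u(t)=c\int_0^t(t-s)^{\beta-1}\,{}_\delta G_\alpha(t-s,\cdot)*Y(s,\cdot)\,ds$ for an auxiliary process $Y$, then bounds $\mathbb{E}|Y(s)|_p^p$ via pointwise BDG plus Young; your direct route is shorter once one invokes BDG for $L^2(\mathbb{R})$-valued martingales, while factorization stays entirely within the Walsh pointwise framework and (in other settings) delivers time-continuity as a byproduct. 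Third, your concern about balancing Young exponents for the drift terms is unnecessary: the paper simply takes $\gamma=1$ in Corollary~\ref{Corollary}, pairing $|{}_\delta G_\alpha^{(k)}(t-s,\cdot)|_1=K(t-s)^{-k/\alpha}$ (integrable since $k\le m\le[\alpha]<\alpha$) with $|h_k(s,\cdot,u(s,\cdot))|_2\le K_T(|a_k|_2+|u(s)|_2)$, so the hypothesis $a_k\in L^2(\mathbb{R})$ is exactly what is used and no exponent bookkeeping is needed.
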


\begin{proof}

\noindent

Let $ \mathcal{H} $ be a Banach space of $ L^{2}-$valued $
\mathcal{F}_{t}-$adapted processes endowed by the norm
$$ |u|_{\mathcal{H}}^{p}:= \int_{0}^{T}e^{-\lambda t}\mathbb{E}|u(t)|_{2}^{p}dt <
\infty,$$  where $ \lambda > 0 $ will be determined later. Let $
\mathcal{H}^*$ denote the subspace of the processes of $\mathcal{H}$
satisfying the assumption (\ref{UnifBoundIneqSolu}). We define on $
\mathcal{H}$ the operator $ \mathcal{A}$ by
\begin{equation}
\mathcal{A}u= \displaystyle \sum_{k=0}^{m+2}\mathcal{A}_{k}u,
\end{equation}
where
\begin{equation}\label{les Ak}
\begin{array}{lr}
\mathcal{A}_{0}u(t) \quad  = \int_{\mathbb{R}}\ds
{}_{\delta}G_{\alpha}(t,.-y)u^{0}(y)dy,
\nonumber\\
\displaystyle\mathcal{A}_{k+1}u(t) = \displaystyle (-1)^{k}\ds
\int_{0}^{t}\ds \ds \int_{\mathbb{R}} \ds \ds
h_{k}(s,y,u(s,y))\frac{\partial^{k} {}_{\delta}G_{\alpha}}{\partial
z^{k}}(t-s,z)|_{.-y}\ dyds, \quad 0 \leq k
\leq m \nonumber\\
\displaystyle\mathcal{A}_{m+2}u(t)=  \displaystyle
\int_{0}^{t}\int_{\mathbb{R}}f(s,y,u(s,y)){}_{\delta}G_{\alpha}(t-s,.-y)W(dyds).\\
\end{array}
\end{equation}
From the sequel it is easy to deduce that the operator $
\mathcal{A}$ takes  $\mathcal{H}$ to the space of $
\mathcal{F}_{t}-$adapted processes $\{u(t), t\geq 0\}$ such that for
almost all $t$, we have $ u(t) \in L^2(\mathbb{R}) \; a.s $. We
prove that $\mathcal{H}^*$ is an invariant subspace for the operator
$ \mathcal{A}$. The restriction of $ \mathcal{A}$ on $\mathcal{H}^*$
will be denoted by $ \mathcal{A}$ too. In fact, let $ u\in
\mathcal{H}^*$. It is easy to see that all the terms in the right
hand side of (\ref{les Ak}), are $ \mathcal{F}_{t}-$adapted
processes when they exist. Further,

$\mathcal{A}_{0}u(t) \in \mathcal{H}^* $ thanks to the assumption $
u^{0} \in L^{p}(\Omega, \mathcal{F}_{0}, L^{2})$ and to the
inequality
\begin{equation}\label{IneqA0u(t)}
|\mathcal{A}_{0}u(t)|_{2} = |G_{\alpha}(t, .)* u^{0}|_{2}\leq
|G_{\alpha}(t, .)|_{1}|u^{0}|_{2} \leq K |u^{0}|_{2} \quad a.s.
\end{equation}
For $ \mathcal{A}_{k}u(t)$, $ k = 0, 1, ..., m $, apply generalized
Minkowsky's inequality, Young's inequality, corollary
\ref{Corollary} and conditions (\ref{Lipschitz Cond.}) to get
\begin{eqnarray}\label{Est.give expect Ak+1Intermd}
\mathbb{E}|\mathcal{A}_{k+1}u(t)|_{2}^p& =& \mathbb{E}\Big(\ds
\int_{\mathbb{R}}\ds \Big|\int_{0}^{t}\ds \ds h_{k}(s,u(s))*
{}_{\delta}G_{\alpha}^{(k)}(t-s, x-.)ds \Big|^{2}dx\Big)^{\frac{p}{2}}\nonumber\\
& \leq & \ds\mathbb{E}\Big( \int_{0}^{t}\{ \ds\int_{\mathbb{R}}
|h_{k}(s,u(s))*
{}_{\delta}G_{\alpha}^{(k)}(t-s,x-.)|^{2}dx\}^{\frac{1}{2}}ds\nonumber\Big)^p\\
& \leq &\ds \mathbb{E}\Big(\int_{0}^{t} |h_{k}(s,u(s))|_{2}
|{}_{\delta}G_{\alpha}^{(k)}(t-s,.)|_{1}ds\Big)^p\nonumber\\
& \leq& K\ds \mathbb{E}\Big(\int_{0}^{t}
(t-s)^{-\frac{k}{\alpha}}\big(|a_{k}|_{2}+
|u(s)|_{2}\big)ds\Big)^p\nonumber\\
& \leq& K \mathbb{E}\big( 1+ \ds \int_{0}^{t}
(t-s)^{-\frac{k}{\alpha}}
|u(s)|_{2}^pds\big) \quad \nonumber\\
& \leq& K \big( 1+ \ds \int_{0}^{t} (t-s)^{-\frac{k}{\alpha}}
\mathbb{E}|u(s)|_{2}^pds\big). \nonumber\\
\end{eqnarray}
This proves, on one hand that
$\sup_{[0,T]}\mathbb{E}|\mathcal{A}_{k+1}u(t)|_{2}^{p}< \infty$ i.e.
$ \{\mathcal{A}_{k+1}u(t), \; t\geq 0\}  $ is an $L^2-$valued
process and satisfies (\ref{UnifBoundIneqSolu}). On the other hand,
thanks to Fubini's Theorem, we get
\begin{eqnarray}\label{Est.give the norm Ak+1}
\quad \int_{0}^{T}e^{-\lambda
t}\mathbb{E}|\mathcal{A}_{k+1}u(t)|_{2}^{p}dt &\leq & K \int_{0}^{T}
e^{-\lambda t}\big(1+ \ds \int_{0}^{t}
(t-s)^{-\frac{k}{\alpha}} \mathbb{E}|u(s)|_{2}^{p}ds\big)dt\nonumber\\
&\leq & K \big(1+ \int_{0}^{T}\big(\int_{0}^{T-s}e^{-\lambda \tau
}\tau^{-\frac{k}{\alpha}}d\tau\big)e^{-\lambda
s}\mathbb{E}|u(s)|_{2}^{p}ds\big)\nonumber\\
&\leq & K \big(1+ \int_{0}^{T}e^{-\lambda
s}\mathbb{E}|u(s)|_{2}^{p}ds\big)< \infty.
\end{eqnarray}
Therefore, $\mathcal{A}_{k+1}u \in \mathcal{H}^*, \forall k=0,1,...m
$. To estimate the stochastic integral, we use the factorization
method (see \cite{DaZa}). By the semigroup property and the Fubini's
Theorem and the identity
$$
\int_{\sigma}^{t}(t-s)^{\beta-1}(s-\sigma)^{-\beta}ds=
\frac{\pi}{\sin \pi \beta}, \qquad \sigma\leq s \leq t, \quad
0<\beta<1,
$$
we get the following representation of $ \mathcal{A}_{m+2}u(t, x) $
$$
\mathcal{A}_{m+2}u(t, x)=
\frac{\sin\pi\beta}{\pi}\int_{0}^{t}(t-s)^{\beta-1}\int_{\mathbb{R}}{}_{\delta}G_{\alpha}(t-s,x-y)Y(s,y)dyds,
$$
where
$$
Y(s,y)=
\int_{0}^{s}(s-\sigma)^{-\beta}\int_{\mathbb{R}}{}_{\delta}G_{\alpha}(s-\sigma,y-z)f(\sigma,
z, u(\sigma, z))W(dzd\sigma).
$$
Let $ \frac{1}{\alpha p}- \frac{1}{2\alpha}< \beta <1 $ . By the
inequalities cited above and the corollary \ref{Corollary}, we get
\begin{eqnarray}\label{Am+2Y(s)}
\mathbb{E}|\mathcal{A}_{m+2}u(t)|_{2}^p& =& \mathbb{E}\Big(\ds
\int_{\mathbb{R}}\ds \Big|\frac{\sin\pi\beta}{\pi}\int_{0}^{t}\ds
\ds(t-s)^{\beta-1}Y(s,.)*{}_{\delta}G_{\alpha}(t-s,.)ds \Big|^{2}dx\Big)^{\frac{p}{2}}\nonumber\\
& \leq & \mathbb{E}\Big(\frac{|\sin\pi\beta|}{\pi}\ds
\int_{0}^{t}(t-s)^{\beta-1} \ds |Y(s,.)*
{}_{\delta}G_{\alpha}(t-s,.)|_{2}ds\Big)^p.\nonumber\\
& \leq & \mathbb{E}\Big(\frac{|\sin\pi\beta|}{\pi}\ds
\int_{0}^{t}(t-s)^{\beta-1} \ds |Y(s)|_{p}
|{}_{\delta}G_{\alpha}(t-s,.)|_{\frac{2p}{3p-2}}ds\Big)^p\nonumber\\
& \leq & K \mathbb{E}\Big(\ds \int_{0}^{t}(t-s)^{\beta-1+
\frac{1}{2\alpha}- \frac{1}{\alpha p}}
\ds |Y(s)|_{p}ds\Big)^p\nonumber\\
& \leq & K \ds \int_{0}^{t}(t-s)^{\beta-1+
\frac{1}{2\alpha}-\frac{1}{\alpha p}}\ds \mathbb{E}|Y(s)|_{p}^{p}ds.\nonumber\\
\end{eqnarray}
On the other hand, under the condition $0 < \beta< \frac{1}{2}+
\frac{1}{\alpha p}-\frac{1}{\alpha} $ and using
Burkholder-Davis-Gundy inequality and the generalized  Minkowsky's
inequality, Young's inequality, corollary \ref{Corollary},
conditions (\ref{Lipschitz Cond.}) and H\"{o}lder inequality, we get
for all $ 0\leq s\leq t\leq T $
\begin{eqnarray*}
\mathbb{E}\big[|Y(s)|_{p}^{p}\big]& \leq & \ds\int_{\mathbb{R}}\ds
\mathbb{E}\ds \sup_{0 \leq \tau \leq
s}\Big|\int_{0}^{\tau}\ds\int_{\mathbb{R}}\ds
\ds(s-\sigma)^{-\beta}{}_{\delta}G_{\alpha}(s-\sigma,y-z)f(\sigma, z, u(\sigma, z))W(dzd\sigma) \Big|^{p}dy\\
& \leq & K \ds\int_{\mathbb{R}}\ds \mathbb{E}\ds
\Big|\int_{0}^{s}\ds\int_{\mathbb{R}}\ds
\ds(s-\sigma)^{-2\beta}{}_{\delta}G_{\alpha}^{2}(s-\sigma,y-z)f^{2}(\sigma,
z, u(\sigma, z))dzd\sigma
\Big|^{\frac{p}{2}}dy\\
& \leq & K \ds \mathbb{E}\ds\int_{\mathbb{R}}\ds
\Big|\int_{0}^{s}\ds(s-\sigma)^{-2\beta}({}_{\delta}G_{\alpha}^{2}(s-\sigma,.)*f^{2}(\sigma,
., u(\sigma, .)))(y)d\sigma
\Big|^{\frac{p}{2}}dy\\
& \leq & K \ds \mathbb{E}\Big(\ds
\int_{0}^{s}\ds(s-\sigma)^{-2\beta}\big|{}_{\delta}G_{\alpha}^{2}(s-\sigma,.)*f^{2}(\sigma,
., u(\sigma, .))\big|_{\frac{p}{2}}d\sigma
\Big)^{\frac{p}{2}}\\
& \leq & K \ds \mathbb{E}\Big(\ds
\int_{0}^{s}\ds(s-\sigma)^{-2\beta}\big|{}_{\delta}G_{\alpha}^{2}(s-\sigma,.)\big|_{\frac{p}{2}}\big|f^{2}(\sigma,
., u(\sigma, .))\big|_{1}d\sigma
\Big)^{\frac{p}{2}}\\
& \leq & K\ds \mathbb{E}\Big(\ds \int_{0}^{s}\ds(s-\sigma)^{-2\beta
+ \frac{2}{\alpha p}- \frac{2}{\alpha}}\big(|a_{m+1}|_{2}^{2}+
|u(\sigma)|_{2}^{2}\big)d\sigma \Big)^{\frac{p}{2}}\\
& \leq & K\ds \Big(1 + \ds \int_{0}^{s}\ds(s-\sigma)^{-2\beta +
\frac{2}{\alpha p}-
\frac{2}{\alpha}}\mathbb{E}|u(\sigma)|_{2}^{p}d\sigma \Big).\\
\end{eqnarray*}
Replacing this last inequality in (\ref{Am+2Y(s)}), we get
\begin{eqnarray}\label{A2(m+2)u}
\mathbb{E}|\mathcal{A}_{m+2}u(t)|_{2}^{p}& \leq & K \ds
\int_{0}^{t}(t-s)^{\beta-1+\frac{1}{2\alpha}-\frac{1}{\alpha
p}}\ds \mathbb{E}|Y(s)|_{p}^{p}ds\nonumber\\
& \leq & K \Big(1+ \ds
\int_{0}^{t}(t-s)^{\beta-1+\frac{1}{2\alpha}-\frac{1}{\alpha
p}}\big(\ds \int_{0}^{s} (s-\sigma)^{-2\beta+
\frac{2}{\alpha P}- \frac{2}{\alpha}}\mathbb{E}|u(\sigma)|_{2}^{p}d\sigma \big)ds\Big)\nonumber\\
& \leq & K \Big(1+ \ds \int_{0}^{t}(t-\sigma)^{-\beta+
\frac{1}{\alpha p}- \frac{3}{2\alpha}}\mathbb{E}|u(\sigma)|_{2}^{p}d\sigma\Big).\nonumber\\
\end{eqnarray}
But $ -\beta-\frac{3}{2\alpha}+\frac{1}{\alpha p} >-1 $ because $
1-\frac{3}{2\alpha}+\frac{1}{\alpha p}> \frac{1}{2}+\frac{1}{\alpha
p}- \frac{1}{\alpha}$. Hence $\{\mathcal{A}_{m+2}u(t), t\geq 0\}$ is
an $L^2-$ process satisfying (\ref{UnifBoundIneqSolu}). Further,
$$\int_{0}^{T}e^{-\lambda
t}\mathbb{E}|\mathcal{A}_{m+2}u(t)|_{2}^{p}dt \leq  K (1+
\int_{0}^{T} e^{-\lambda
\sigma}\mathbb{E}|u(\sigma)|_{2}^{p}d\sigma)<\infty.$$ It is clear
that $ \frac{1}{\alpha p}- \frac{1}{2\alpha }<
\frac{1}{2}+\frac{1}{\alpha p}- \frac{1}{\alpha}$, provided that $
\alpha>1$. The parameter $ \beta $ satisfies the inequality
$\max\{0, \frac{1}{\alpha p}- \frac{1}{2\alpha}\}< \beta<\min\{
\frac{1}{2}+ \frac{1}{\alpha p}-\frac{1}{\alpha}, 1\} $. This
achieves the proof that $ \mathcal{A}u \in \mathcal{H}^*$.

To prove the existence and the uniqueness of the solution in
$\mathcal{H}^*$, we use the fixed point method. Let $ u, v \in
\mathcal{H}^* $, we have for all $ k = 0, 1, ... m $, for all $t>0$,
\begin{eqnarray*}
\mathbb{E}|\mathcal{A}_{k+1}u(t) - \mathcal{A}_{k+1}v(t)|_{2}^p&=&
\mathbb{E}\Big(\int_{\mathbb{R}}\big|\int_{0}^{t}{}_{\delta}G_{\alpha}^{(k)}(t-s,.)*\big(
h_{k}(s, u(s))- h_{k}(s,
v(s))\big)ds\big|^{2}dx\Big)^{\frac{p}{2}}\\
&\leq& \mathbb{E}\big(\int_{0}^{t}
|{}_{\delta}G_{\alpha}^{(k)}(t-s,.)*\big( h_{k}(s, u(s))-
h_{k}(s,v(s))\big)|_{2}ds\big)^p\\
&\leq & \mathbb{E}\big(\int_{0}^{t}
|{}_{\delta}G_{\alpha}^{(k)}(t-s,.)|_{1} \big| h_{k}(s,
u(s))- h_{k}(s,v(s))\big|_{2}ds\big)^p\\
&\leq & K\mathbb{E}\big(\int_{0}^{t}(t-s)^{-\frac{k}{\alpha}}
\big|u(s)- v(s)\big|_{2}ds\big)^p\\
&\leq & K\int_{0}^{t}(t-s)^{-\frac{m}{\alpha}}\mathbb{E} \big|u(s)-
v(s)\big|_{2}^{p}ds.
\end{eqnarray*}
Therefore
\begin{eqnarray}\label{ A(k)u-A(k)v}
|\mathcal{A}_{k+1}u - \mathcal{A}_{k+1}v|^{p}_{\mathcal{H}}& \leq &
K\int_{0}^{T}e^{-\lambda s}\Big[\int_{0}^{T}e^{-\lambda
\tau}\tau^{-\frac{m}{\alpha}}d\tau\Big] \mathbb{E}\big|u(s)-
v(s)\big|_{2}^{p}ds \nonumber \\
& \leq &
K\lambda^{\frac{m-\alpha}{\alpha}}\Gamma(\frac{\alpha-m}{\alpha})
\big|u - v \big|^{p}_{\mathcal{H}}.\nonumber \\
\end{eqnarray}
For the stochastic integral, we use again the factorization method
for the same $ \beta $. We get
$$
\mathcal{A}_{m+2}u(t,x) - \mathcal{A}_{m+2}v(t,x)=
\frac{\sin\pi\beta}{\pi}\int_{0}^{t}(t-s)^{\beta-1}\int_{\mathbb{R}}{}_{\delta}G_{\alpha}(t-s,x-y)\zeta(s,y)dyds,
$$
where
$$
\zeta(s,y)=
\int_{0}^{s}(s-\sigma)^{-\beta}\int_{\mathbb{R}}{}_{\delta}G_{\alpha}(s-\sigma,y-z)\big(
f(\sigma, z, u(\sigma, z))- f(\sigma, z, v(\sigma,
z))\big)W(dzd\sigma).
$$
Using the calculus above, we obtain
$$
\mathbb{E}|\mathcal{A}_{m+2}u(t) - \mathcal{A}_{m+2}v(t)|_{2}^{p}
\leq K \int_{0}^{t}(t-s)^{\beta-1-\frac{1}{\alpha}+
\frac{3p-2}{2\alpha p}}\mathbb{E}|\zeta (s)|_{p}^{p}ds,
$$
and
\begin{eqnarray*}
\mathbb{E}|\zeta (s)|_{p}^{p}& \leq &K\ds \mathbb{E}
\Big(\int_{0}^{s}\ds(s-\sigma)^{-2\beta}\big|{}_{\delta}G_{\alpha}^{2}(s-\sigma,.)\big|_{\frac{p}{2}}\big|
\big(f(\sigma, ., u(\sigma, .))- f(\sigma, ., v(\sigma,
.))\big)^{2}\big|_{1}d\sigma \Big)^{\frac{p}{2}}\\
& \leq & K\ds \int_{0}^{s}\ds(s-\sigma)^{-2\beta + \frac{2}{\alpha
p} - \frac{2}{\alpha}}\mathbb{E}\big|u(\sigma)-
v(\sigma)\big|_{2}^{p}d\sigma.
\end{eqnarray*}
Hence
\begin{eqnarray*}
\mathbb{E}|\mathcal{A}_{m+2}u(t) - \mathcal{A}_{m+2}v(t)|_{2}^{p} &
\leq & K\int_{0}^{t}(t-s)^{\beta-1-\frac{1}{\alpha}+
\frac{3p-2}{2\alpha p}}\int_{0}^{s}\ds(s-\sigma)^{-2\beta +
\frac{2}{\alpha p} - \frac{2}{\alpha}}\mathbb{E}\big|u(\sigma)-
v(\sigma)\big|_{2}^{p}d\sigma ds\\
& \leq & K\int_{0}^{t}(t-\sigma)^{-\beta+\frac{1}{\alpha p}-
\frac{3}{2\alpha}}\mathbb{E}\big|u(\sigma)-
v(\sigma)\big|_{2}^{p}d\sigma,\
\end{eqnarray*}
and therefore
\begin{eqnarray}\label{ A2(m+1)u-A(m+1)v}
|\mathcal{A}_{m+2}u - \mathcal{A}_{m+2}v|^{p}_{\mathcal{H}}& \leq &
K\int_{0}^{T}e^{-\lambda s}\Big[\int_{0}^{T}e^{-\lambda
\tau}\tau^{^{-\beta+\frac{1}{\alpha p}-
\frac{3}{2\alpha}}}d\tau\Big] \mathbb{E}\big|u (s)
- v(s) \big|_{2}^{p}ds \nonumber \\
& \leq & K\lambda^{\beta-\frac{1}{\alpha p}+
\frac{3}{2\alpha}-1}\Gamma(1-\beta + \frac{1}{\alpha p}-
\frac{3}{2\alpha})
\big|u - v \big|_{\mathcal{H}}^{p}.\nonumber \\
\end{eqnarray}
For a good choice of the constant $ \lambda $ such that $
\max\{(m+1)K\lambda^{\beta-\frac{1}{\alpha p}+
\frac{3}{2\alpha}-1}\Gamma(1-\beta+\frac{1}{\alpha p}-
\frac{3}{2\alpha}),
K\lambda^{\frac{m-\alpha}{\alpha}}\Gamma(\frac{\alpha-m}{\alpha})\}
< 1 $, the operator $ \mathcal{A} $ is then a contraction. This
choice is possible because the exponent $\beta-\frac{1}{\alpha p}+
\frac{3}{2\alpha}-1$ is also negative. So there exists an unique $
L^{2}(\mathbb{R})-$valued $ \mathcal{F}_{t}-$adapted process $ u \in
\overline{\mathcal{H}^*}\subset \mathcal{H} $ solution of Equation
(\ref{Eq.Mild.Solu}), where $\overline{\mathcal{H}^*}$ is the
closure of the subspace $\mathcal{H}^* $ in $\mathcal{H}$. We prove
now that $ u \in \mathcal{H}^* $ i.e.  $ u$ satisfies
(\ref{UnifBoundIneqSolu}). In fact, we have on one hand $
\mathcal{A}u=u$, on the other hand by a similar calculus of that
done above, we get for all $ t > 0$, the inequalities
(\ref{IneqA0u(t)}), (\ref{Est.give expect Ak+1Intermd}) and
(\ref{A2(m+2)u}). Hence
\begin{eqnarray}\label{ReccuFormEu}
\mathbb{E}|u(t)|_{2}^{p} &\leq &K \Big(1+ \ds
\int_{0}^{t}\Big[\sum_{k=0}^{m} (t-\sigma)^{-\frac k\alpha} +
(t-\sigma)^{-\beta+ \frac{1}{\alpha p}-
\frac{3}{2\alpha}}\Big]\mathbb{E}|u(\sigma)|_{2}^{p}d\sigma\Big)\nonumber\\
&\leq &K \Big(1+ \ds \int_{0}^{t}
(t-\sigma)^{-\gamma}\mathbb{E}|u(\sigma)|_{2}^{p}
d\sigma\Big),\nonumber\\
\end{eqnarray}
where $\gamma= \min\{\frac m\alpha, \beta- \frac{1}{\alpha p}+
\frac{3}{2\alpha}\}$. By Gronwall Lemma  we get $
\mathbb{E}|u(t)|_{2}^{p} \leq K_\gamma e^{K_\gamma T} $, hence $u$
satisfies (\ref{UnifBoundIneqSolu}). We prove now the uniqueness of
the solution with respect to the sup norm. Let $ u_1, u_2$ be two
solutions of (\ref{Eq.Mild.Solu}) in $\mathcal{H}^*$. By a similar
calculus as above, we obtain for all $t>0$
\begin{eqnarray*}
\mathbb{E}|u_1(t)- u_2(t)|_{2}^{p} &\leq &K_\alpha  \int_{0}^{t}
\mathbb{E}|u_1(\sigma)- u_2(\sigma)|_{2}^{p}
d\sigma.\nonumber\\
\end{eqnarray*}
Again by Gronwall Lemma we get $\sup_{[0, T]}\mathbb{E}|u_1(t)-
u_2(t)|_{2}^{p}=0$ ( see e.g. \cite{W1} page 314 for a similar
calculus for $\alpha=2$ ).
\end{proof}

\begin{coro}

Let $ \alpha >1$, $ u^{0} \in L^{p}(\Omega, \mathcal{F}_{0},
L^{2})$, $ p\geq 1 $ such that $ \sup_y\mathbb{E}|u^{0}(y)|^q<\infty
$ for some $q\geq 2$, and let the coefficients $f, h_k\;
k=0,1,...m+1 $ satisfy the conditions (\ref{Lipschitz Cond.}) such
that $ a_k \in L_\infty (\mathbb{R})\cap L^2(\mathbb{R}), \;
k=0,1,...m+1$. Then the equation in (\ref{Eq.Principal}) admits a
unique $L^2-$mild solution which is continuous in space and in time
and satisfies the inequality
\begin{equation}
\max\{\sup_{[0, T]}\mathbb{E}|u(s)|_{2}^p, \sup_{[0,
T]}\sup_y\mathbb{E}|u(s, y)|^q \}<\infty.
\end{equation}
\end{coro}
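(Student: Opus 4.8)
The plan is to build on Theorem \ref{Exist.mild.solu.}, whose hypotheses are already satisfied here (since $a_k\in L^\infty\cap L^2$ is stronger than $a_k\in L^2$). So the existence and uniqueness of an $L^2$-mild solution $u$ with $\sup_{[0,T]}\mathbb{E}|u(s)|_2^p<\infty$ is free; what remains is to establish the extra \emph{pointwise} bound $\sup_{[0,T]}\sup_y\mathbb{E}|u(s,y)|^q<\infty$ together with the space-time continuity of $u$. First I would revisit the fixed-point scheme, but now carrying the auxiliary norm
$$
\|u\|_*:=\sup_{[0,T]}\sup_{y}\bigl(\mathbb{E}|u(s,y)|^q\bigr)^{1/q},
$$
and show the operator $\mathcal{A}$ maps the subspace of processes with $\|u\|_*<\infty$ into itself. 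For $\mathcal{A}_0$, the scaling property (iv) of Lemma \ref{properties of G} and Corollary \ref{Corollary} with the $L^{q'}$-norm of the Green kernel (via H\"older, using $\sup_y\mathbb{E}|u^0(y)|^q<\infty$) give a pointwise bound. For the drift terms $\mathcal{A}_{k+1}$, one estimates $\mathbb{E}|\mathcal{A}_{k+1}u(t,x)|^q$ by Minkowski in the time integral, then H\"older in the space integral against $|{}_\delta G^{(k)}_\alpha(t-s,\cdot)|_\gamma$ (finite by Corollary \ref{Corollary} because $a_k\in L^\infty$ controls $h_k(s,y,u(s,y))$ uniformly up to $|u(s,y)|$), producing $\int_0^t (t-s)^{-c}\sup_y\mathbb{E}|u(s,y)|^q\,ds$ with $c<1$ thanks to $\alpha>1$. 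For the stochastic term I would again use factorization for a suitable $\beta$, apply Burkholder--Davis--Gundy to bound $\mathbb{E}|Y(s,y)|^q$ pointwise, and then close the loop with a singular Gronwall lemma in the $\|\cdot\|_*$ norm, exactly mirroring the computation \eqref{ReccuFormEu}. Since the contraction constant in Theorem \ref{Exist.mild.solu.} was obtained by choosing $\lambda$ large, the same $\lambda$ (or a larger one) makes $\mathcal{A}$ a contraction simultaneously on both the $\mathcal{H}^*$-type norm and the pointwise norm, so the unique fixed point lies in the intersection and inherits both bounds.

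For the continuity, I would show $ (t,x)\mapsto u(t,x)$ has a continuous modification by the Kolmogorov--Chentsov criterion, estimating $\mathbb{E}|u(t,x)-u(t',x')|^q$ for $q$ large. The three contributions are handled separately: the homogeneous part $\mathcal{A}_0u$ is continuous (indeed smooth) because ${}_\delta G_\alpha(t,\cdot)\in S^\infty$ by (iii) and by dominated convergence using $u^0\in L^2$; the drift parts are controlled by splitting the time integral and using the regularity and decay estimates of ${}_\delta G^{(k)}_\alpha$ from Lemma \ref{properties of G} and Corollary \ref{Corollary}, together with the now-established pointwise moment bound on $u$; and the stochastic convolution is treated via the factorization representation, where one needs a Sobolev-type embedding argument (moving regularity in $s$ onto H\"older regularity in $(t,x)$) combined with BDG and the kernel estimates, again exploiting $a_{m+1}\in L^\infty$ to keep $f(\sigma,z,u(\sigma,z))$ controlled pointwise. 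The choice $q\ge 2$ and taking $q$ as large as needed gives H\"older exponents strictly positive, whence the Kolmogorov criterion applies and yields joint continuity.

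The main obstacle I expect is the stochastic-convolution term in the continuity estimate: one must obtain H\"older estimates in both $t$ and $x$ uniformly, and the singular factor $(s-\sigma)^{-\beta}$ in the factorization forces a delicate balancing of the exponent $\beta$ against the integrability exponents coming from Corollary \ref{Corollary} and from the requirement that the resulting time integrals converge --- essentially the same trilinear interaction between the order $\alpha$ of the operator, the integrability parameter $q$, and the factorization parameter $\beta$ that already constrained the proof of Theorem \ref{Exist.mild.solu.}. The condition $\alpha>1$ is exactly what leaves enough room, and the hypothesis $a_k\in L^\infty$ is what upgrades the $L^2$-based estimates of the previous theorem to the uniform-in-$y$ estimates needed here; once those are in place the Kolmogorov argument and the Gronwall closure are routine.
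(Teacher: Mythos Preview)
Your approach is sound in outline but takes a genuinely different route from the paper. The paper's proof is a single sentence: it invokes Theorem~1 of \cite{DD1} (which, under the stronger hypothesis $a_k\in L^\infty\cap L^2$, already produces a random-field solution that is continuous in $(t,x)$ and satisfies $\sup_{[0,T]}\sup_y\mathbb{E}|u(s,y)|^q<\infty$) together with Theorem~\ref{Exist.mild.solu.} of the present paper (which gives the $L^2$-mild solution with $\sup_{[0,T]}\mathbb{E}|u(s)|_2^p<\infty$). Since both theorems yield uniqueness in their respective senses and the assumptions here make both applicable, the two solutions coincide and the combined estimate follows. Your proposal, by contrast, rebuilds the content of \cite{DD1} from scratch: rerunning the fixed-point argument in the pointwise norm $\|\cdot\|_*$ and then establishing continuity via Kolmogorov--Chentsov. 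This buys self-containment at the cost of considerably more work; the paper simply outsources that work to the earlier reference.

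One point in your sketch deserves care: you write ``taking $q$ as large as needed'' for the Kolmogorov criterion, but the hypothesis fixes a single $q\ge 2$ via $\sup_y\mathbb{E}|u^0(y)|^q<\infty$, and you cannot freely enlarge it. In the approach of \cite{DD1} the continuity is obtained for the $q$ actually assumed (with suitable restrictions on $\alpha$ and $q$ so the moment increments dominate the space--time dimension), so if you pursue your direct route you must check that the given $q$ suffices rather than appealing to an arbitrarily large moment.
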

The corollary follows from Theorem 1 in \cite{DD1} and Theorem
\ref{Exist.mild.solu.} in this paper.

\section{Equivalence of Solutions}

Let us consider two kinds of solutions of variational type of the
SFPDE in (\ref{Eq.Principal}), for which the coefficients $ f, h_k $
satisfy the conditions (\ref{Lipschitz Cond.}) and the initial
condition $u^0 \in L^p(\Omega, \mathcal{F}_0, L^2)$.
\begin{defn}\label{Def.Weak.Sol.1}
A $ L^{2}-$valued $ \mathcal{F}_t-$ adapted stochastic process $u
=\{ u(t,.), t\in [0,T]\}$ is said to be a weak solution of the first
kind on the interval $ [0,T] $ of Equation (\ref{Eq.Principal}), if
$u $ satisfies the assumption (\ref{UnifBoundIneqSolu}) and the
following integral equation, for all $ t \in [0, T] $ and for all $
\phi \in C^{\infty}_{0} $, where $ \phi \in C^{\infty}_{0} $ is the
set of infinitely differentiable functions with compact support on $
\mathbb{R}$:
\begin{equation}\label{Weak.Eq.1}
\begin{array}{rl}
\ds \int_{\mathbb{R}}\ds u(t,x)\phi(x)dx = & \ds\int_{\mathbb{R}}\ds
u^{0}(x)\phi(x)\ dx  +  \ds \int_{0}^{t}\ds\int_{\mathbb{R}}\ds
u(s,x) {}_{x}D^{\alpha}_{-\delta} \phi(x)
dx ds \\
+ & \sum_{k=0}^{m}(-1 )^{k}\ds \int_{0}^{t}\ds \ds \int_{\mathbb{R}}
\ds \ds h_{k}(s,x,u(s,x))
\phi^{(k)}(x)\ dxds \\
 & + \ds
\int_{0}^{t}\int_{\mathbb{R}}f(s,x,u(s,x))\phi(x)W(dxds)\; \; a.s.
\end{array}
\end{equation}
\end{defn}

\begin{defn}\label{Def.Weak.Sol.2}
A $ L^{2}-$valued $ \mathcal{F}_t-$ adapted stochastic process $u
=\{ u(t,.), t\in [0,T]\}$ is said to be a weak solution of the
second kind on the interval $ [0,T] $ of Equation
(\ref{Eq.Principal}), if $u $ satisfies the assumption
(\ref{UnifBoundIneqSolu}) and the following integral equation, for
all $ t \in [0, T] $ and for all $  \psi \in C^{1, \infty}((0,
t)\times \mathbb{R})$ and such that $\psi(s, .) \in
D(D^{\alpha}_{\delta}), \forall s< t$:
\begin{equation}\label{Var.Eq}
\begin{array}{rl}
\ds \int_{\mathbb{R}}\ds u(t,x)\psi(t, x)dx = &
\ds\int_{\mathbb{R}}\ds u^{0}(x)\psi(0, x)\ dx +
\int_{0}^{t}\int_{\mathbb{R}}u(s,x)
\partial_{s} \psi(s, x)
dx ds \\
+ & \ds \int_{0}^{t}\ds\int_{\mathbb{R}}\ds u(s,x)
{}_{x}D^{\alpha}_{-\delta} \psi(s, x)
dx ds \\
+ & \sum_{k=0}^{m}(-1 )^{k}\ds \int_{0}^{t}\ds \ds \int_{\mathbb{R}}
\ds \ds h_{k}(s,x,u(s,x))
\partial_{x}^{(k)} \psi(s,x)\ dxds \\
 & + \ds
\int_{0}^{t}\int_{\mathbb{R}}f(s,x,u(s,x))\psi(s,x)W(dxds)\;\; a.s.
\end{array}
\end{equation}
\end{defn}

\begin{theorem}\label{Theo.Equivalence Solu.}
For $p\geq 2$, the different notions of solutions given in
Definitions \ref{Def.mild.Solu.}, \ref{Def.Weak.Sol.1} and
\ref{Def.Weak.Sol.2} are equivalent.
\end{theorem}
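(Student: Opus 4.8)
The plan is to prove the cyclic chain \emph{mild $\Rightarrow$ weak of the second kind $\Rightarrow$ weak of the first kind $\Rightarrow$ mild}. The middle implication is trivial: inserting into (\ref{Var.Eq}) a time-independent test function $\psi(s,x)=\phi(x)$ with $\phi\in C_{0}^{\infty}$ (note $\phi\in D(D^{\alpha}_{\delta})$ since $\widehat{\phi}$ is rapidly decreasing) kills the term $\int_{0}^{t}\int_{\mathbb{R}}u(s,x)\partial_{s}\psi(s,x)\,dx\,ds$ and turns (\ref{Var.Eq}) into (\ref{Weak.Eq.1}), while the bound (\ref{UnifBoundIneqSolu}) is built into both definitions. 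The two substantive implications rest on the same toolbox: the deterministic and stochastic Fubini theorems (the stochastic one being the lemma above, extended from nonnegative to finite signed measures $\phi(x)\,dx$ by splitting $\phi=\phi^{+}-\phi^{-}$), together with the fact that $f\mapsto {}_{\delta}G_{\alpha}(t,\cdot)*f$ is the semigroup $e^{tD^{\alpha}_{\delta}}$ whose $L^{2}$-adjoint is $e^{tD^{\alpha}_{-\delta}}$, because $\overline{{}_{-\delta}\psi_{\alpha}(\lambda)}={}_{\delta}\psi_{\alpha}(\lambda)$; consequently $\langle e^{rD^{\alpha}_{\delta}}g,\phi\rangle=\langle g,e^{rD^{\alpha}_{-\delta}}\phi\rangle$, and $\partial_{x}$ commutes with both semigroups. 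The integrability needed for each interchange comes from the growth conditions (\ref{Lipschitz Cond.}), Corollary \ref{Corollary} and (\ref{UnifBoundIneqSolu}); the assumption $p\ge 2$ enters exactly here, as in the proof of Theorem \ref{Exist.mild.solu.}, to bound the second moments of the stochastic terms by Burkholder-Davis-Gundy.

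For \emph{mild $\Rightarrow$ weak of the second kind}, fix $t$ and an admissible $\psi$, and substitute (\ref{Eq.Mild.Solu}) for $u(s)$ in the term $\int_{0}^{t}\langle u(s),(\partial_{s}+{}_{x}D^{\alpha}_{-\delta})\psi(s,\cdot)\rangle\,ds$ of the right-hand side of (\ref{Var.Eq}), leaving the terms containing $h_{k}(s,\cdot,u(s,\cdot))$ and $f(s,\cdot,u(s,\cdot))$ untouched. After interchanging the orders of integration, the initial piece of (\ref{Eq.Mild.Solu}) produces $\int_{0}^{t}\langle e^{sD^{\alpha}_{\delta}}u^{0},(\partial_{s}+D^{\alpha}_{-\delta})\psi(s,\cdot)\rangle\,ds$, which, since $\frac{d}{ds}\langle e^{sD^{\alpha}_{\delta}}u^{0},\psi(s,\cdot)\rangle=\langle e^{sD^{\alpha}_{\delta}}u^{0},(\partial_{s}+D^{\alpha}_{-\delta})\psi(s,\cdot)\rangle$, telescopes to $\langle e^{tD^{\alpha}_{\delta}}u^{0},\psi(t,\cdot)\rangle-\langle u^{0},\psi(0,\cdot)\rangle$; the drift and the stochastic pieces each produce a double time integral whose inner integral telescopes the same way, leaving an $s=t$ endpoint and an $s=\sigma$ endpoint. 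Using $\langle {}_{\delta}G_{\alpha}(r,\cdot-y),\phi\rangle=(e^{rD^{\alpha}_{-\delta}}\phi)(y)$ and one integration by parts in $x$ that moves $\partial_{x}^{k}$ onto $\psi$ (this is where the factors $(-1)^{k}$ appear), the $s=\sigma$ endpoints cancel exactly against $\langle u^{0},\psi(0,\cdot)\rangle$, against $\sum_{k}(-1)^{k}\int_{0}^{t}\langle h_{k}(s,\cdot,u(s,\cdot)),\partial_{x}^{k}\psi(s,\cdot)\rangle\,ds$, and against $\int_{0}^{t}\int_{\mathbb{R}}f(s,x,u(s,x))\psi(s,x)\,W(dx\,ds)$ already present in (\ref{Var.Eq}), while the $s=t$ endpoints reassemble, by the mild formula (\ref{Eq.Mild.Solu}) itself, into $\langle u(t,\cdot),\psi(t,\cdot)\rangle$, which is the left-hand side of (\ref{Var.Eq}).

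For \emph{weak of the first kind $\Rightarrow$ mild}, fix $t$ and $\phi\in C_{0}^{\infty}$. Since ${}_{\delta}G_{\alpha}(r,\cdot)\in S^{\infty}$ for $r>0$, a density argument extends the validity of (\ref{Weak.Eq.1}) to test functions in $S^{\infty}$, in particular to $\psi(s,\cdot):=e^{(t-s)D^{\alpha}_{-\delta}}\phi$, which is smooth and decaying in $x$, continuous on $[0,t]$, and $C^{1}$ in $s$ with $\partial_{s}\psi(s,\cdot)=-\,{}_{x}D^{\alpha}_{-\delta}\psi(s,\cdot)$. Fix a partition $0=s_{0}<\cdots<s_{N}=t$. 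Subtracting (\ref{Weak.Eq.1}), written with the fixed test function $\psi(s_{j},\cdot)$, at $s_{j+1}$ and $s_{j}$ yields
\begin{equation*}
\langle u(s_{j+1}),\psi(s_{j},\cdot)\rangle-\langle u(s_{j}),\psi(s_{j},\cdot)\rangle=\int_{s_{j}}^{s_{j+1}}\langle u(s),{}_{x}D^{\alpha}_{-\delta}\psi(s_{j},\cdot)\rangle\,ds+\sum_{k=0}^{m}(-1)^{k}\int_{s_{j}}^{s_{j+1}}\langle h_{k}(s,\cdot,u(s,\cdot)),\partial_{x}^{k}\psi(s_{j},\cdot)\rangle\,ds+\int_{s_{j}}^{s_{j+1}}\int_{\mathbb{R}}f(s,x,u(s,x))\psi(s_{j},x)\,W(dx\,ds).
\end{equation*}
Summing over $j$, rewriting the telescoping left-hand side as $\langle u(t),\phi\rangle-\langle u^{0},\psi(0,\cdot)\rangle-\sum_{j}\langle u(s_{j+1}),\psi(s_{j+1},\cdot)-\psi(s_{j},\cdot)\rangle$, and letting the mesh tend to $0$: the drift Riemann sums converge because $s\mapsto\langle u(s),\phi\rangle$ is continuous (its continuous version being read off from the right-hand side of (\ref{Weak.Eq.1})) and $\psi$ is continuous; the stochastic Riemann sums converge in $L^{2}(\Omega)$ by an estimate uniform in the partition (Burkholder-Davis-Gundy, Corollary \ref{Corollary}, (\ref{UnifBoundIneqSolu})); and $\sum_{j}\langle u(s_{j+1}),\psi(s_{j+1},\cdot)-\psi(s_{j},\cdot)\rangle\to\int_{0}^{t}\langle u(s),\partial_{s}\psi(s,\cdot)\rangle\,ds=-\int_{0}^{t}\langle u(s),{}_{x}D^{\alpha}_{-\delta}\psi(s,\cdot)\rangle\,ds$, which cancels the first term on the right. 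What remains is the identity $\langle u(t),\phi\rangle=\langle u^{0},e^{tD^{\alpha}_{-\delta}}\phi\rangle+\sum_{k=0}^{m}(-1)^{k}\int_{0}^{t}\langle h_{k}(s,\cdot,u(s,\cdot)),\partial_{x}^{k}e^{(t-s)D^{\alpha}_{-\delta}}\phi\rangle\,ds+\int_{0}^{t}\int_{\mathbb{R}}f(s,x,u(s,x))\,(e^{(t-s)D^{\alpha}_{-\delta}}\phi)(x)\,W(dx\,ds)$; transferring the semigroups and the derivatives $\partial_{x}^{k}$ onto $u^{0}$, onto $h_{k}$, and onto the Green kernel of the stochastic integral (adjoint identity, $\partial_{x}$-commutation, integration by parts, and the stochastic Fubini theorem), this is precisely the right-hand side of (\ref{Eq.Mild.Solu}) paired with $\phi$. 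Since $C_{0}^{\infty}$ is dense in $L^{2}(\mathbb{R})$ and both sides lie in $L^{2}(\mathbb{R})$ almost surely, (\ref{Eq.Mild.Solu}) follows.

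The main obstacle is this last implication. Its delicate points are: (i) the density extension of (\ref{Weak.Eq.1}) from $C_{0}^{\infty}$ to $S^{\infty}$, needed because the natural test function $e^{(t-s)D^{\alpha}_{-\delta}}\phi$ is not compactly supported; this requires controlling the fractional operator under spatial truncation and uses the polynomial decay of ${}_{\delta}G_{\alpha}$ and its derivatives recorded in Lemma \ref{properties of G}(v); (ii) the partition-uniform $L^{2}(\Omega)$ bound that makes the stochastic Riemann sums converge, obtained again from Burkholder-Davis-Gundy together with Corollary \ref{Corollary} and (\ref{UnifBoundIneqSolu}); and (iii) upgrading the family of almost-sure identities, one for each fixed $t$, to a single null set, which is handled by working throughout with the continuous-in-$t$ versions of all the integral terms. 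Everything else, namely the repeated use of classical and stochastic Fubini, of the adjoint relation $(D^{\alpha}_{\delta})^{*}=D^{\alpha}_{-\delta}$, and of integration by parts to relocate the derivatives $\partial_{x}^{k}$ (which accounts for the signs $(-1)^{k}$ in (\ref{Eq.Mild.Solu}) and (\ref{Weak.Eq.1})), is routine bookkeeping.
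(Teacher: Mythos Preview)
Your proof is correct, and for the implication \emph{mild $\Rightarrow$ weak of the second kind} it is essentially the paper's argument viewed from the other side: the paper substitutes the mild formula into $\langle u(t),\psi(t,\cdot)\rangle$ and rewrites each piece using the identity $\int_{\mathbb{R}}{}_{\delta}G_{\alpha}(t-s,x-y)\psi(t,x)\,dx=\psi(s,y)+\int_{s}^{t}\!\int_{\mathbb{R}}{}_{\delta}G_{\alpha}(\sigma-s,x-y)\big(\partial_{\sigma}+{}_{x}D^{\alpha}_{-\delta}\big)\psi(\sigma,x)\,dx\,d\sigma$, whereas you substitute into $\int_{0}^{t}\langle u(s),(\partial_{s}+D^{\alpha}_{-\delta})\psi(s,\cdot)\rangle\,ds$ and telescope; these are two readings of the same fundamental-theorem-of-calculus computation.

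The genuine difference lies in how you close the cycle. The paper does \emph{not} prove weak of the first kind $\Rightarrow$ mild; instead it proves \emph{weak of the second kind $\Rightarrow$ mild} directly by plugging the time-dependent test function $\psi^{t}(s,\cdot)={}_{-\delta}G_{\alpha}(t-s,\cdot)*\phi$ straight into (\ref{Var.Eq}). Since Definition~\ref{Def.Weak.Sol.2} only demands $\psi\in C^{1,\infty}((0,t)\times\mathbb{R})$ with $\psi(s,\cdot)\in D(D^{\alpha}_{\delta})$, this $\psi^{t}$ is admissible as is; the relation $\partial_{s}\psi^{t}=-{}_{x}D^{\alpha}_{-\delta}\psi^{t}$ then annihilates the two $u(s)$-terms in (\ref{Var.Eq}) and leaves exactly the mild formula paired with $\phi$. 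This bypasses both obstacles you flag: no density extension from $C_{0}^{\infty}$ to $S^{\infty}$ is needed, and no partition-in-time argument is required. Your route through weak of the first kind is legitimate and yields the same equivalence, but at the cost of the two technical steps (i) and (ii) that the paper's choice of weak formulation was designed to avoid. In short, the paper buys simplicity by proving the harder-looking implication (weak second $\Rightarrow$ mild) which is actually the easier one; you prove the apparently simpler implication (weak first $\Rightarrow$ mild) which turns out to carry the real work.
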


\begin{proof}
We prove the equivalence between Definition \ref{Def.mild.Solu.} and
Definition \ref{Def.Weak.Sol.2}. The other equivalences are obtained
by a similar way ( see \cite{GongyPrint}, for $ \alpha =2$).

Let $ u= \{u(t) , t \in [0, T]\} $ be a weak solution of second kind
of Equation (\ref{Eq.Principal}) and let $ \phi(.) \in
C^{\infty}_{0}(\mathbb{R})$. We define the function $ \psi^{t}(s,
x)$ by
$$
\psi^{t}(s, x) = \Big\{
\begin{array}{lr}
\phi(x), \mbox{ when } t = s,\\
\int_{\mathbb{R}}{}_{-\delta}G_{\alpha}(t-s, x-y)\phi(y)dy,\mbox{
when } s < t,
\end{array}
$$
The function $ \psi^{t}(.,.) \in C^{1, \infty}((0, t)\times
\mathbb{R})$ and we have for all fixed $ s < t$, $
\mathcal{F}\{\psi^{t}(s, x), \lambda\}=
e^{{}_{-\delta}\psi_{\alpha}(\lambda)(t-s)}\hat{\phi}(\lambda). $
Hence $ \psi^{t}(s, x) \in D({}_{x}D^{\alpha}_{-\delta})$ and $
{}_{x}D^{\alpha}_{-\delta}\psi^{t}(s, x)=
\mathcal{F}^{-1}\{{}_{-\delta}
\psi_{\alpha}(\lambda)e^{{}_{-\delta}\psi_{\alpha}(\lambda)(t-s)}\hat{\phi}(\lambda),
x\}$. On the other hand for fixed $ x $, $ \psi^{t}(s, x) $ is
differentiable with respect to $ s<t $, because $
{}_{-\delta}G_{\alpha}(t-s, z) $ is differentiable with respect to $
s $ and  $\delta_s {}_{-\delta}G_{\alpha}(t-s, .) \phi(.)$ is
integrable. Further
\begin{eqnarray*}
\partial_{s}\psi^{t}(s,x) &=& \int_{\mathbb{R}}\partial_{s}{}_{-\delta}G_{\alpha}(t-s,
x-y)\phi(y)dy\\
&=& -\int_{\mathbb{R}}\partial_{\tau}{}_{-\delta}G_{\alpha}(\tau,
y)|_{\tau= t-s}\phi(x-y)dy\\
&=&
-\int_{\mathbb{R}}{}_{y}D^{\alpha}_{-\delta}{}_{-\delta}G_{\alpha}(t-s,
y)\phi(x-y)dy\\
&=&-\mathcal{F}^{-1}\{{}_{-\delta}\psi_{\alpha}(\lambda)e^{{}_{-\delta}\psi_{\alpha}(\lambda)(t-s)}
\hat{\phi}(\lambda), x\}\\
&=& - {}_{x}D^{\alpha}_{-\delta}\psi^{t}(s, x).
\end{eqnarray*}
We replace $ \psi(s , x) $ by $ \psi^{t}(s , x) $ in equation
(\ref{Var.Eq}), apply deterministic and stochastic Fubini's Theorems
and the fact that $ {}_{-\delta}G_{\alpha}(t-s, x-y) =
{}_{\delta}G_{\alpha}(t-s, y-x)$. We interpret the integrals on $
\mathbb{R}$ as the scalar product in $ L^{2}(\mathbb{R}) $ and use
estimates as in section 2 to prove that $ \mathcal{A}_{k}u(t) \in
L^{2}(\mathbb{R})\;  a.s., \forall k\in \{0, 1, ..., m+2\} $, we get
$ \langle u(t)-( \mathcal{A}_{0}+ \sum_{k=0}^{m}\mathcal{A}_{k+1}-
\mathcal{A}_{m+2})u(t), \phi \rangle_{L^{2}}= 0 $. Since $
C^{\infty}_{0}$ is dense in $L^2(\mathbb{R})$, we obtain that $ u=
\{u(t) , t \in [0, T]\} $ is a mild solution of Equation
(\ref{Eq.Principal}). Fubini's Theorems are applied thanks to
$$
\int_{\mathbb{R}}\int_{\mathbb{R}}|u^{0}(x)||{}_{\delta}G_{\alpha}(t-s,
y-x)||\phi(y)| dy dx \leq
|u^{0}|_{2}|\phi|_{2}|{}_{\delta}G_{\alpha}(1,.)|_{2}< \infty \; \;
\; a.s,
$$
\begin{eqnarray*}
\mathbb{E}\Big(\int_{0}^{t}\int_{\mathbb{R}}\int_{\mathbb{R}}|h_{k}(s,
x, u(s,x))|
\!\!\!\!\!&|&\!\!\!\!\!\!{}_{-\delta}G_{\alpha}^{(k)}(t-s, x-y)||\phi(y)| dy dx ds \Big)^p\\
&\leq &\ds\mathbb{E}\Big(\int_{0}^{t}\ds |h_{k}(s,., u(s,.))|_{2}
||{}_{-\delta}G_{\alpha}^{(k)}(t-s, .)|*|\phi||_{2}ds\Big)^p\\
&\leq& K |\phi|_{2}^p\mathbb{E}\Big(1 +
\int_{0}^{t}(t-s)^{-\frac{k}{\alpha}}|u(s)|_{2}ds\Big)^p\\
&\leq& K |\phi|_{2}^p\mathbb{E}\Big(1 +
\int_{0}^{t}(t-s)^{-\frac{k}{\alpha}}|u(s)|_{2}^pds\Big)\\
&\leq& K (1 + \sup_{[0, T]}\mathbb{E}|u(s)|_{2}^{p})<\infty,\\
\end{eqnarray*}
\begin{eqnarray*}
\int_{0}^{t}\int_{\mathbb{R}}\mathbb{E}\Big(\int_{\mathbb{R}}\phi(y)f(s,
x, u(s,
x)\!\!\!\!\!\!\!\!&\!\!\!)\!\!\!\!&\!\!\!\!\!\!\!\!\!{}_{-\delta}G_{\alpha}(t-s,
x-y)dy\Big)^{2}dxds\\
&\leq& K|\phi|_{\infty}^2|{}_{-\delta}G_{\alpha}(1,
.)|_{1}^2\mathbb{E}(1 + \int_{0}^{t}|u(s)|_{2}^{2}ds)\\
&\leq& K\mathbb{E}(1 + \int_{0}^{t}|u(s)|_{2}^{2}ds).\\
\end{eqnarray*}
Using Jensen inequality, we get
\begin{eqnarray*}
\Big(\int_{0}^{t}\int_{\mathbb{R}}\mathbb{E}\Big(\int_{\mathbb{R}}\phi(y)f(s,
x, u(s,
x)\!\!\!\!\!\!\!\!&\!\!\!)\!\!\!\!&\!\!\!\!\!\!\!\!\!{}_{-\delta}G_{\alpha}(t-s,
x-y)dy\Big)^{2}dxds\Big)^\frac p2\\
&\leq& K(1 + \int_{0}^{t}\mathbb{E}|u(s)|_{2}^{p}ds)<\infty.
\end{eqnarray*}

Let now $u= \{ u(t), t\geq 0\}$  be a mild solution of Equation
(\ref{Eq.Principal}). From Theorem \ref{Exist.mild.solu.}, $u$
satisfies the inequality (\ref{UnifBoundIneqSolu}). Furthermore, for
fixed $t>0$, let $  \psi \in C^{1, \infty}([0, t]\times \mathbb{R})$
such that $\psi(s, .) \in D(D^{\alpha}_{\delta}), \forall s< t$. by
replacing the right hand side of (\ref{Eq.Mild.Solu}) in the left
hand side of (\ref{Var.Eq}), we get
\begin{eqnarray}\label{Eq.mild to weak N1}
\int_{\mathbb{R}}u(t, x)\psi(t, x) dx &=& \int_{\mathbb{R}}\psi(t,
x)\big(\int_{\mathbb{R}}{}_{\delta}G_{\alpha}(t,
x-y)u^{0}(y)dy\big)dx\nonumber\\
&+& \sum_{k=0}^{m}(-1)^{k}\int_{\mathbb{R}}\psi(t,
x)\big(\int_{0}^{t}\int_{\mathbb{R}}{}_{\delta}G_{\alpha}^{(k)}(t-s,
x-y)h_{k}(s, y, u(s, y))dyds\big)dx\nonumber\\
&+&\int_{\mathbb{R}}\psi(t,
x)\big(\int_{0}^{t}\int_{\mathbb{R}}{}_{\delta}G_{\alpha}(t-s,
x-y)f(s, y, u(s, y))W(dyds)\big)dx.\nonumber\\
\end{eqnarray}
Applying Fubini's Theorems (deterministic and stochastic) to each
term on the right hand of (\ref{Eq.mild to weak N1}), we get the
terms $ \int_{\mathbb{R}}{}_{\delta}G_{\alpha}^{(k)}(t-s,
x-y)\psi(t, x)dx, k=0,1,...m $ in the right hand side of
(\ref{Eq.mild to weak N1}). Using the properties of Green's function
$ {}_{\delta}G_{\alpha}(t, x) $ and the integral by parts (see
\cite{D2}), we obtain
\begin{eqnarray}\label{Eq.1 From mild to weak}
\int_{\mathbb{R}}{}_{\delta}G_{\alpha}(t-s, x-y)\psi(t, x)dx &=&
\psi(s, y)+
\int_{\mathbb{R}}\int_{s}^{t}\frac{\partial}{\partial\sigma}({}_{\delta}G_{\alpha}(\sigma-s,
x-y)\psi(\sigma, x))dxd\sigma \nonumber\\
&=&\psi(s, y) + \int_{s}^{t}\int_{\mathbb{R}}
{}_{\delta}G_{\alpha}(\sigma-s,
x-y){}_{x}D^{\alpha}_{-\delta}\psi(\sigma, x)dxd\sigma   \nonumber\\
&+&\int_{s}^{t}\int_{\mathbb{R}} {}_{\delta}G_{\alpha}(\sigma-s,
x-y)\partial_{\sigma} \psi(\sigma, x)dxd\sigma.\nonumber\\
\end{eqnarray}
Furthermore, by the commutativity of the operators $
{}_{x}D^{\alpha}_{-\delta} $ and $ D^{k}$, where this last is the
classical differential operator of entire order $ k $, we get
\begin{eqnarray}\label{Eq.Prticular deriv. phi}
\int_{\mathbb{R}}{}_{\delta}G_{\alpha}^{(k)}(t-s, x-y)\psi(t, x)dx
&=& (-1)^k\int_{\mathbb{R}}{}_{\delta}G_{\alpha}(t-s,
x-y)\psi^{(k)}_x(t,
x)dx\nonumber\\
&=&\psi^{(k)}_{y}(s, y) + \int_{s}^{t}\int_{\mathbb{R}}
{}_{\delta}G_{\alpha}^{(k)}(\sigma-s,
x-y){}_{x}D^{\alpha}_{-\delta}\psi(\sigma, x)dxd\sigma   \nonumber\\
&+&\int_{s}^{t}\int_{\mathbb{R}}
{}_{\delta}G_{\alpha}^{(k)}(\sigma-s,
x-y)\partial_{\sigma} \psi(\sigma, x)dxd\sigma.\nonumber\\
\end{eqnarray}
By replacing (\ref{Eq.1 From mild to weak}) and (\ref{Eq.Prticular
deriv. phi}) in to corresponding terms in (\ref{Eq.mild to weak N1})
and again by applying Fubini's Theorem, we get
\begin{eqnarray}
\int_{\mathbb{R}}\psi(t,x)\big(\int_{\mathbb{R}}{}_{\delta}G_{\alpha}(t,
x-y)u^{0}\!\!\!\!\!\!\!\!\!&{}& \!\!\!\!\!\!\!(y)dy\big)dx = \int_{\mathbb{R}}u^{0}(y)\psi(0,y) dy\nonumber \\
\!\!\!\!\!\!\!&+&\!\!\!\! \!\!\!\int_{0}^{t}\int_{\mathbb{R}}\Big(
\int_{\mathbb{R}}{}_{\delta}G_{\alpha}(\sigma,
x-y)u^{0}(y)dy\Big)({}_{x}D^{\alpha}_{-\delta}\psi(\sigma, x)+ \partial_{\sigma}\psi(\sigma, x))dxd\sigma, \nonumber \\
\end{eqnarray}
\begin{eqnarray}
\int_{\mathbb{R}}\psi(t,x)\!\!\!\!\!\!\!\!\!\!\!&{}&\!\!\!\!\!\big(\int_{0}^{t}\int_{\mathbb{R}}{}_{\delta}G_{\alpha}(t-s,
x-y)f(s, y, u(s, y)))W(dyds)\big)dx=\int_{0}^{t}\int_{\mathbb{R}}f(s, y, u(s, y))\psi(s, y)W(dyds)\nonumber\\
&+&\int_{0}^{t}\int_{\mathbb{R}}\big(\int_{0}^{\sigma}\int_{\mathbb{R}}{}_{\delta}G_{\alpha}(\sigma-s,
x-y)f(s, y, u(s, y)))W(dyds)\big)({}_{x}D^{\alpha}_{-\delta}\psi(\sigma, x) + \partial_{\sigma}\psi(\sigma, x))dxd\sigma, \nonumber\\
\end{eqnarray}
\begin{eqnarray}
\int_{\mathbb{R}}\psi(t,x)\!\!\!\!\!\!\!\!\!\!\!&{}&\!\!\!\!\!\!\!\big(\int_{0}^{t}\int_{\mathbb{R}}
{}_{\delta}G_{\alpha}^{(k)}(t-s,
x-y)h_{k}(s, y, u(s, y))dyds\big)dx
=\int_{0}^{t}\int_{\mathbb{R}}h_{k}(s, y, u(s,
y))\psi^{(k)}_{y}(s, y)dyds\nonumber\\
&+&\int_{0}^{t}\int_{\mathbb{R}}\big(\int_{0}^{\sigma}\int_{\mathbb{R}}{}_{\delta}G_{\alpha}^{(k)}
(t-s, x-y)h_{k}(s, y, u(s,
y))dy ds\big)({}_{x}D^{\alpha}_{-\delta}\psi(\sigma, x)+\partial_{\sigma}\psi(\sigma, x))dxd\sigma.\nonumber\\
\end{eqnarray}

Replacing these equalities in (\ref{Eq.mild to weak N1}) and using
the fact that $ u(t, .) $ is a mild solution, we get that $ u(t, .)
$ satisfies Equation (\ref{Var.Eq}). We can check as in the first
part of the prove that Fubini's Theorems can be applied.

\end{proof}
As a consequence of Theorems \ref{Exist.mild.solu.} and
\ref{Theo.Equivalence Solu.}, we have
\begin{coro}
Equation (\ref{Eq.Principal}) for with the coefficients $ f, h_k $
satisfying the conditions (\ref{Lipschitz Cond.}) and the initial
condition $u^0 \in L^p(\Omega, \mathcal{F}_0, L^2)$, $p\geq 2$
admits a unique weak solution of first kind and of second kind.
\end{coro}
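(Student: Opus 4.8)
The plan is to assemble the two main results of the preceding sections; no new analysis is required. First I would invoke Theorem~\ref{Exist.mild.solu.}. Since $p\geq 2\geq 1$ and $u^{0}\in L^{p}(\Omega,\mathcal{F}_{0},L^{2})$, the conditions (\ref{Lipschitz Cond.}) guarantee that Equation~(\ref{Eq.Principal}) has a unique $L^{2}$-mild solution $u=\{u(t,\cdot),\, t\in[0,T]\}$, and, crucially, this solution satisfies the a priori bound (\ref{UnifBoundIneqSolu}). Keeping track of the latter is the only point that needs a word of care throughout the argument, because the definitions of the two weak solutions (Definitions~\ref{Def.Weak.Sol.1} and \ref{Def.Weak.Sol.2}) and the statement of the equivalence theorem all presuppose it.

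Next I would apply Theorem~\ref{Theo.Equivalence Solu.}, which is valid exactly in the range $p\geq 2$ at hand: the notions of $L^{2}$-solution in Definitions~\ref{Def.mild.Solu.}, \ref{Def.Weak.Sol.1} and \ref{Def.Weak.Sol.2} are equivalent. Applying the implication ``mild $\Rightarrow$ weak'' to the solution $u$ obtained above shows that $u$ is simultaneously a weak solution of the first kind and a weak solution of the second kind. This establishes existence in both senses.

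For uniqueness I would argue by reduction to the mild case. Let $v$ be any weak solution of the first kind on $[0,T]$ (the second-kind case is identical). By Definition~\ref{Def.Weak.Sol.1}, $v$ satisfies (\ref{UnifBoundIneqSolu}), so Theorem~\ref{Theo.Equivalence Solu.} applies to $v$ and yields that $v$ is an $L^{2}$-mild solution of~(\ref{Eq.Principal}). Theorem~\ref{Exist.mild.solu.} then gives $\sup_{[0,T]}\mathbb{E}|v(t)-u(t)|_{2}^{p}=0$, i.e. $v$ and $u$ coincide in the sense of the norm appearing in (\ref{UnifBoundIneqSolu}); hence the weak solution of the first kind is unique, and likewise for the second kind. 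Since both Theorem~\ref{Exist.mild.solu.} and Theorem~\ref{Theo.Equivalence Solu.} are already proved, there is no real obstacle; the statement is a formal consequence, and the proof amounts to this chain of two citations together with the remark on (\ref{UnifBoundIneqSolu}).
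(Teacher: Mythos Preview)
Your proposal is correct and matches the paper's approach exactly: the paper presents this corollary without proof, merely prefacing it with ``As a consequence of Theorems~\ref{Exist.mild.solu.} and~\ref{Theo.Equivalence Solu.}'', which is precisely the chain of citations you spell out. Your added remark that uniqueness is obtained by pulling an arbitrary weak solution back to the mild setting via Theorem~\ref{Theo.Equivalence Solu.} (using that Definitions~\ref{Def.Weak.Sol.1} and~\ref{Def.Weak.Sol.2} build in condition~(\ref{UnifBoundIneqSolu})) makes explicit what the paper leaves implicit.
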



\section{Application of Fourier transform in a random SFPDE} We consider the following SFPDE obtained
from Equation (\ref{Eq.Principal}) by taking $ h_{k}(t, x, r)= c_{k}
r $, for all $ 0\leq k\leq m $ and where $ f $ may be random but
independent of the solution $ u $
\begin{equation}\label{Eq.Princ.Four}
\frac{\partial }{\partial s}u(s, y)= {}_{x}D^{\alpha}_{\delta}u(s,
y)+ \sum_{0}^{m}c_{k}\frac{\partial^{k}}{\partial y^{k}}u(s, y)+
f(s, y)\frac{\partial^{2} W}{\partial s\partial y}(y, s).
\end{equation}
Multiplying the two sides of the above equation by $ e^{i\lambda y}$
and integrating with respect to $ s $ and to $ y$,we get
\begin{equation}\label{Eq.u.Hat.Integ.Form.Appro.Special}
\hat{u}(t,\lambda)= \hat{u^{0}}(\lambda)+
({}_{\delta}\psi_{\alpha}(\lambda)+ \sum_{0}^{m}c_{k}(-i\lambda)^{k}
)\int_{0}^{t}\hat{u}(s,\lambda)ds +  \int_{0}^{t}\int_{\mathbb{R}}
e^{i\lambda y}f(s, y) W(dyds)
\end{equation}
which has a rigorous meaning. Let us denote by $ \eta_{\lambda} (t)
$ the stochastic integral in the equality above. It is known that $
\eta_{\lambda} (t) $ is a martingale \cite{W1}. The Equation
(\ref{Eq.u.Hat.Integ.Form.Appro.Special}) is equivalent to the
following linear stochastic differential equation perturbed by the
martingale $ \{\eta_{\lambda} (t), t\geq 0\} $
\begin{equation}\label{Eq.u.Hat.diff.Form.Appro.Special}
d\hat{u}_{\lambda}(t)=
({}_{\delta}\psi_{\alpha}(\lambda)+\sum_{0}^{m}c_{k}(-i\lambda)^{k})\hat{u}_{\lambda}(t)dt
+ d\eta_{\lambda} (t),
\end{equation}
with initial condition $\hat{u}_{\lambda}(t) =
\hat{u^{0}}(\lambda)$. This equation admits the explicit solution
\begin{equation}\label{Form.SoluFour}
\hat{u}_{\lambda}(t)=\hat{u}_{0}(\lambda)e^{({}_{\delta}\psi_{\alpha}(\lambda)+
\sum_{0}^{m}c_{k}(-i\lambda)^{k})t}+
\int_{0}^{t}e^{({}_{\delta}\psi_{\alpha}(\lambda)+
\sum_{0}^{m}c_{k}(-i\lambda)^{k})(t-s)}d\eta_{\lambda}(s),
\end{equation}
which is a generalization of an Ornstein-Uhlenbeck process.

\begin{prop}
Consider the equation (\ref{Eq.Princ.Four}), with $c_k=0, \forall
k\in \overline{0m}$ and $u^0\in L^p(\Omega, \mathcal{F}_0, L^2)$.
The process whose Fourier transform is given by
(\ref{Form.SoluFour}) is equivalent to the solution in the sense of
Definitions \ref{Def.mild.Solu.}, \ref{Def.Weak.Sol.1} and
\ref{Def.Weak.Sol.2}.
\end{prop}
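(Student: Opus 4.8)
The plan is to show that the process $u$ obtained by inverting the Fourier transform in (\ref{Form.SoluFour}) coincides with the unique mild solution of (\ref{Eq.Princ.Four}) furnished by Theorem \ref{Exist.mild.solu.} --- which applies since $c_k=0$ makes every $h_k\equiv 0$, so only the growth bound on $f$ is used --- and then to invoke Theorem \ref{Theo.Equivalence Solu.} to pass from the mild solution to the weak solutions of first and of second kind. Concretely, one computes the spatial Fourier transform of the mild solution
$$
u(t,\cdot)=\int_{\mathbb{R}}{}_{\delta}G_{\alpha}(t,\cdot-y)u^{0}(y)\,dy+\int_{0}^{t}\int_{\mathbb{R}}f(s,y)\,{}_{\delta}G_{\alpha}(t-s,\cdot-y)\,W(dyds)
$$
and checks that it equals, for a.e.\ $\lambda$ and almost surely, the right-hand side of (\ref{Form.SoluFour}); since $\mathcal{F}$ is, up to a constant, an isometry of $L^{2}(\mathbb{R})$, and the Fourier transform of the mild solution lies in $L^{2}(\mathbb{R})$ a.s.\ (because $u(t,\cdot)$ does, by Theorem \ref{Exist.mild.solu.}), this identity of Fourier transforms forces $u(t,\cdot)$ to equal the process in (\ref{Form.SoluFour}) as elements of $L^{2}(\mathbb{R})$, a.s.

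For the deterministic term I would use that ${}_{\delta}G_{\alpha}(t,\cdot)=\mathcal{F}^{-1}\{e^{{}_{\delta}\psi_{\alpha}(\cdot)t}\}$ belongs to $L^{1}(\mathbb{R})\cap L^{2}(\mathbb{R})$ (it lies in $S^{\infty}$ by Lemma \ref{properties of G}(iii), and $|{}_{\delta}G_{\alpha}(t,\cdot)|_{1}<\infty$ by Corollary \ref{Corollary}), so the convolution theorem gives $\mathcal{F}\{{}_{\delta}G_{\alpha}(t,\cdot)*u^{0}\}(\lambda)=e^{{}_{\delta}\psi_{\alpha}(\lambda)t}\widehat{u^{0}}(\lambda)$, which is the first term of (\ref{Form.SoluFour}) once $c_k=0$. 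For the stochastic term, the translation rule $\mathcal{F}\{g(\cdot-y)\}(\lambda)=e^{i\lambda y}\hat{g}(\lambda)$ applied to $g={}_{\delta}G_{\alpha}(t-s,\cdot)$ yields $\mathcal{F}\{{}_{\delta}G_{\alpha}(t-s,\cdot-y)\}(\lambda)=e^{i\lambda y}e^{{}_{\delta}\psi_{\alpha}(\lambda)(t-s)}$; then, after splitting the $\mathbb{C}$-valued integrand into its real and imaginary parts, the stochastic Fubini Theorem (the Lemma above, with $\mathcal{X}=\mathbb{R}$ taken to be the Fourier variable) lets me pull the Fourier transform inside the stochastic integral,
$$
\mathcal{F}\Big\{\int_{0}^{t}\!\int_{\mathbb{R}}f(s,y)\,{}_{\delta}G_{\alpha}(t-s,\cdot-y)W(dyds)\Big\}(\lambda)=\int_{0}^{t}\!\int_{\mathbb{R}}e^{{}_{\delta}\psi_{\alpha}(\lambda)(t-s)}e^{i\lambda y}f(s,y)\,W(dyds),
$$
and, unwinding the definition $\eta_{\lambda}(s)=\int_{0}^{s}\int_{\mathbb{R}}e^{i\lambda y}f(r,y)W(dydr)$ via the associativity of stochastic integration, the right-hand side is exactly $\int_{0}^{t}e^{{}_{\delta}\psi_{\alpha}(\lambda)(t-s)}\,d\eta_{\lambda}(s)$. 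Adding the two pieces recovers (\ref{Form.SoluFour}).

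The integrability hypotheses for these two interchanges are mild: for $\alpha\notin\mathbb{N}$ one has $|\delta|<1$, hence $\mathrm{Re}\,{}_{\delta}\psi_{\alpha}(\lambda)=-|\lambda|^{\alpha}\cos(\delta\pi/2)<0$ for $\lambda\neq0$, so $e^{{}_{\delta}\psi_{\alpha}(\lambda)\tau}$ decays like $e^{-c|\lambda|^{\alpha}\tau}$ and $\int_{\mathbb{R}}|e^{{}_{\delta}\psi_{\alpha}(\lambda)\tau}|^{2}\,d\lambda\leq C\tau^{-1/\alpha}$, which is integrable near $\tau=0$ since $\alpha>1$; combined with the second-moment estimate for $\mathcal{A}_{m+2}u$ already obtained in the proof of Theorem \ref{Exist.mild.solu.} (via Corollary \ref{Corollary} and Plancherel), this verifies the hypothesis of the stochastic Fubini Theorem. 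The main obstacle is thus the careful bookkeeping in these interchanges --- commuting the spatial Fourier integral with the $W$-integral, and identifying $\int_{0}^{t}g(s)\,d\eta_{\lambda}(s)$ with the corresponding iterated integral against $W$ --- together with the routine extension of the stochastic Fubini Theorem to $\mathbb{C}$-valued integrands by real/imaginary decomposition; everything else (the convolution theorem, Plancherel, Fourier inversion for ${}_{\delta}G_{\alpha}(t,\cdot)\in L^{1}\cap L^{2}$, and the final appeal to Theorems \ref{Exist.mild.solu.} and \ref{Theo.Equivalence Solu.}) is standard.
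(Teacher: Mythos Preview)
Your proposal is correct and follows essentially the same route as the paper: both arguments hinge on interchanging the spatial Fourier integral with the Walsh integral via stochastic Fubini, together with the convolution identity $\mathcal{F}\{{}_{\delta}G_{\alpha}(\tau,\cdot)\}(\lambda)=e^{{}_{\delta}\psi_{\alpha}(\lambda)\tau}$, and then appeal to Theorem \ref{Theo.Equivalence Solu.} for the weak formulations. The only cosmetic difference is the direction of the computation --- the paper applies $\mathcal{F}^{-1}$ to (\ref{Form.SoluFour.Prop}) to recover the mild form (and then remarks that the reverse direction goes ``by the same calculus''), whereas you apply $\mathcal{F}$ to the mild solution to recover (\ref{Form.SoluFour}); your treatment is in fact more explicit about the integrability hypothesis needed for Fubini.
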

\begin{proof}
Let $\{u(t), t\geq 0\}$ be the process whose Fourier transform is
given by (\ref{Form.SoluFour}). It is sufficient to prove that it is
the mild solution. Then  $\hat{u}(t, \lambda) $ is given for all $
\lambda $ by the following formulae

\begin{equation}\label{Form.SoluFour.Prop}
\hat{u}(t,
\lambda)=\hat{u}^{0}(\lambda)e^{{}_{\delta}\psi_{\alpha}(\lambda)
t}+
\int_{0}^{t}e^{{}_{\delta}\psi_{\alpha}(\lambda)(t-s)}d\eta_{\lambda}(s).
\end{equation}
Applying the inverse Fourier transform on the two sides of this
equation, replacing $ \eta_\lambda (s)$ by its values and using the
Fubini's Theorem, we get
\begin{eqnarray*}
u(t, .)&=& u^0* {}_{\delta}G_{\alpha}(t, .)+
\int_{0}^{t}\int_{\mathbb{R}}
\Big(\int_{\mathbb{R}}e^{-i\lambda(.-y)}
e^{{}_{\delta}\psi_{\alpha}(\lambda)(t-s)} d\lambda\Big)  f(s, y) W(dyds)\nonumber\\
&=& \int_{\mathbb{R}}{}_{\delta}G_{\alpha}(t, .-y)u^0(y)dy+
\int_{0}^{t}\int_{\mathbb{R}}
{}_{\delta}G_{\alpha}(t-s, .-y) f(s, y)W(dyds).\nonumber\\
\end{eqnarray*}
By the same calculus and using the Fourier transform, we prove that
the Fourier transform of the mild solution is given by
(\ref{Form.SoluFour.Prop}).
\end{proof}


\end{document}